\documentclass[11pt,a4paper]{amsart}
\usepackage{amssymb}
\usepackage{mathabx}
\usepackage{mathrsfs}
\usepackage{syntonly}
\usepackage{amsmath}
\usepackage{amsthm}
\usepackage{amsfonts}
\usepackage{amssymb}
\usepackage{latexsym}
\usepackage{amscd,amssymb,amsopn,amsmath,amsthm,graphics,amsfonts,mathrsfs,accents,enumerate,verbatim,calc}
\usepackage[dvips]{graphicx}
\usepackage[colorlinks=true,linkcolor=red,citecolor=blue]{hyperref}
\usepackage[all]{xy}

\date{}
\allowdisplaybreaks[4] \footskip=15pt
\renewcommand{\uppercasenonmath}[1]{}

\numberwithin{equation}{section} \theoremstyle{plain}
\newtheorem{lem}{Lemma}[section]
\newtheorem{cor}[lem]{Corollary}
\newtheorem{prop}[lem]{Proposition}
\newtheorem{thm}[lem]{Theorem}

\numberwithin{equation}{section} \theoremstyle{definition}

\newtheorem{definition}[lem]{Definition}
\newtheorem{Ex}[lem]{Example}

\newtheorem{rem}[lem]{Remark}

\newenvironment{rmk}{\begin{rem}\rm}{\end{rem}}
\newenvironment{df}{\begin{definition}\rm}{\end{definition}}
\newenvironment{ex}{\begin{Ex}\rm}{\end{Ex}}

\begin{document}
\begin{center}
{\large  \bf  Compatibility of HRS tilt and completion of $t$-structures in triangulated categories}

\vspace{0.5cm}  Jiaojiao Lu\footnote{Corresponding author}, Zhongkui Liu, Renyu Zhao
\end{center}

\bigskip
\centerline { \bf  Abstract}
\medskip

\leftskip10truemm \rightskip10truemm \noindent Let $K$ be a field, and $\mathcal{T}$ a $K$-linear essentially small triangulated category equipped with an extendable $t$-structure $(\mathcal{T}^{\leq0}, \mathcal{T}^{\geq0})$ with respect to a good metric $\mathfrak{B}$. Given a torsion class $\mathcal{X}$ in the heart of $(\mathcal{T}^{\leq0}, \mathcal{T}^{\geq0})$, we prove that lifting the HRS-tilt of $(\mathcal{T}^{\leq0}, \mathcal{T}^{\geq0})$ at $\mathcal{X}$ along the completion of $\mathcal{T}$ coincides with the HRS-tilt of the lifted $t$-structure at the completion of $\mathcal{X}$. As an application, we provide the compatibility result between left silting mutation in $\mathcal{T}$ and HRS tilting in the ambient completion.    \\[2mm]
{\bf Keywords:} Completion, Bounded $t$-structure, HRS-tilt $t$-structure.\\
{\bf 2020 Mathematics Subject Classification:} 18E30; 18G35; 18G80.

\leftskip0truemm \rightskip0truemm
\section { \bf Introduction}
The notion of a $t$-structure was introduced by Beilinson, Bernstein and Deligne \cite{BBD} as a triangulated analogue of torsion pairs in abelian categories. Its heart provide a natural abelian subcategory of the ambient triangulated category. It has since become a fundamental and widely applicable tool in algebra, geometry, and topology; see \cite{B, HKM, KV, S}. The HRS-tilt $t$-structure, introduced by Happel, Reiten and Smal{\o} \cite{HRS}, is a tool that convert the torsion theory in abelian category into new abelian category by transforming $t$-structures. This construction generalizes  the classical tilting theory from module-based operations to a categorical framework using torsion pairs and $t$-structures, thereby serving as a key bridge between the representation theory of algebras and the algebraic geometry. Adachi, Mizuno and Yang \cite{AMY} established a relationship between the left mutation of a silting object and the semisimple tilt of the associated $t$-structure.

The concept of completion was introduced by Neeman in his pioneering work \cite{N1}, which provides a method to construct a new triangulated category using the internal structure of the original one. The procedure mainly relies on metrics that are a technical tool on triangulated categories. Biswas, Chen, Rahul, Parker and Zheng in their main theorem \cite[Theorem 3.5]{BCRPZ} proved that under suitable assumptions on the metric, the completion of a bounded $t$-structure is again a $t$-structure. They further showed that an essentially small category admitting a bounded $t$-structure coincides with its completion under an appropriate finite-dimensional assumption. For more on completions of triangulated categories see \cite{CG, HL, M}.

So starting from a given $t$-structure on a $K$-linear essentially small triangulated category $\mathcal{T}$ with respect to a good metric $\mathfrak{B}$, there are multiple ways to construct new $t$-structures. Let $(\mathcal{T}^{\leq0}, \mathcal{T}^{\geq0})$ be an extendable $t$-structure on $\mathcal{T}$ with respect to $\mathfrak{B}$. Then, on the one hand,
$(\mathfrak{G}(\mathcal{T}^{\leq0}), \mathfrak{G}(\mathcal{T}^{\geq0}))$ is a $t$-structure of the triangulated category $\mathfrak{G}(\mathcal{T})$, where $\mathfrak{G}$ is the completion functor with respect to the metric $\mathfrak{B}$. On the other hand, for a torsion class $\mathcal{X}$ of the heart $\mathcal{T}^{0}$ of $(\mathcal{T}^{\leq0}, \mathcal{T}^{\geq0})$,  define the following two full subcategories of $\mathcal{T}$:
\begin{center}
$\mu^{\mathrm{L}}_{\mathcal{X}}\mathcal{T}^{\leq0}:=\mathcal{T}^{\leq-1}*\mathcal{X},~
\mu^{\mathrm{L}}_{\mathcal{X}}\mathcal{T}^{\geq0}:=\Sigma\mathcal{X}^{\perp_{\mathcal{T}^{0}}}*\mathcal{T}^{\geq0}$.
\end{center}
 Then the pair $(\mu^{\mathrm{L}}_{\mathcal{X}}\mathcal{T}^{\leq0},\mu^{\mathrm{L}}_{\mathcal{X}}\mathcal{T}^{\geq0})$ is also a  $t$-structure on $\mathcal{T}$. We call this $t$-structure HRS-tilt of $(\mathcal{T}^{\leq0}, \mathcal{T}^{\geq0})$ with respect to  $\mathcal{X}$. Now we have the following diagram:
 \[ \xymatrix{
 \mathcal{T}:\ar[d]^{\mathfrak{G}} & (\mathcal{T}^{\leq0}, \mathcal{T}^{\geq0}) \ar[r]\ar[d]^{\mathfrak{G}} &(\mu^{\mathrm{L}}_{\mathcal{X}}\mathcal{T}^{\leq0},\mu^{\mathrm{L}}_{\mathcal{X}}\mathcal{T}^{\geq0})   \ar[d]^{\mathfrak{G}}  \\
 \mathfrak{G}(\mathcal{T}):  & (\mathfrak{G}(\mathcal{T}^{\leq0}), \mathfrak{G}(\mathcal{T}^{\geq0})) &  (\mathfrak{G}(\mu^{\mathrm{L}}_{\mathcal{X}}\mathcal{T}^{\leq0}), \mathfrak{G}(\mu^{\mathrm{L}}_{\mathcal{X}}\mathcal{T}^{\geq0}))
 .}\]
This leads to the following question:

$\mathbf{Question}$: What is the relationship between the lifted HRS-tilted $t$-structures $(\mathfrak{G}(\mu^{\mathrm{L}}_{\mathcal{X}}\mathcal{T}^{\leq0}), \mathfrak{G}(\mu^{\mathrm{L}}_{\mathcal{X}}\mathcal{T}^{\geq0}))$ and the HRS-tilt of the lifted $t$-structure $(\mathfrak{G}(\mathcal{T}^{\leq0}), \mathfrak{G}(\mathcal{T}^{\geq0}))$?

The purpose of this article is to show that the $t$-structure $(\mathfrak{G}(\mu^{\mathrm{L}}_{\mathcal{X}}\mathcal{T}^{\leq0}), \mathfrak{G}(\mu^{\mathrm{L}}_{\mathcal{X}}\mathcal{T}^{\geq0}))$ is exactly the HRS-tilt of $t$-structure $(\mathfrak{G}(\mathcal{T}^{\leq0}), \mathfrak{G}(\mathcal{T}^{\geq0}))$.
More precisely, we show the following theorem.

\begin{thm} \label{thmA} Let $\mathcal{T}$ be a $K$-linear essentially small triangulated category with a good metric $\mathfrak{B}=\{\mathfrak{B}_{n}\}_{n\in\mathbb{N}}$, $(\mathcal{T}^{\leq0},\mathcal{T}^{\geq0})$ be a $t$-structure on $\mathcal{T}$ and $\mathcal{X}$ be a torsion class of the heart $\mathcal{T}^{0}$. If $(\mathcal{T}^{\leq0},\mathcal{T}^{\geq0})$ is extendable with respect to the good metric $\mathfrak{B}$, then
 $$\mathfrak{G}(\mu^{\mathrm{L}}_{\mathcal{X}}\mathcal{T}^{\leq0})=\mu^{\mathrm{L}}_{\mathfrak{G}(\mathcal{X})}\mathfrak{G}(\mathcal{T}^{\leq0}), \ \ \
 \mathfrak{G}(\mu^{\mathrm{L}}_{\mathcal{X}}\mathcal{T}^{\geq0})=\mu^{\mathrm{L}}_{\mathfrak{G}(\mathcal{X})}\mathfrak{G}(\mathcal{T}^{\geq0}).$$
\end{thm}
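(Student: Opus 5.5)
The plan is to reduce the problem to a statement about subcategories of the completion. Recall the setting: $\mathfrak{G}(\mathcal{T})=\mathfrak{S}(\mathcal{T})\cap\mathfrak{C}(\mathcal{T})$ sits inside the category of $K$-linear functors $\mathcal{T}^{\mathrm{op}}\to\mathrm{mod}\,K$ via the Yoneda functor $\mathcal{Y}$. For a full subcategory $\mathcal{A}\subseteq\mathcal{T}$, $\mathfrak{G}(\mathcal{A})$ is the full subcategory of $\mathfrak{G}(\mathcal{T})$ of objects of the form $\mathrm{colim}\,\mathcal{Y}(E_i)$ for Cauchy sequences $E_i$ with all $E_i\in\mathcal{A}$. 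Since both sides of each equality are aisles or coaisles of $t$-structures on $\mathfrak{G}(\mathcal{T})$, it suffices to prove the two inclusions
$$\mathfrak{G}(\mu^{\mathrm{L}}_{\mathcal{X}}\mathcal{T}^{\leq0})\subseteq\mu^{\mathrm{L}}_{\mathfrak{G}(\mathcal{X})}\mathfrak{G}(\mathcal{T}^{\leq0}),\qquad \mathfrak{G}(\mu^{\mathrm{L}}_{\mathcal{X}}\mathcal{T}^{\geq0})\subseteq\mu^{\mathrm{L}}_{\mathfrak{G}(\mathcal{X})}\mathfrak{G}(\mathcal{T}^{\geq0}).$$
If $(\mathcal{U},\mathcal{V})$ and $(\mathcal{U}',\mathcal{V}')$ are $t$-structures with $\mathcal{U}\subseteq\mathcal{U}'$ and $\mathcal{V}\subseteq\mathcal{V}'$, then $\mathcal{U}'={}^{\perp}(\Sigma^{-1}\mathcal{V}')\subseteq{}^{\perp}(\Sigma^{-1}\mathcal{V})=\mathcal{U}$, so both equalities follow. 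For this to make sense I will first check that $\mathfrak{G}(\mathcal{X})$ is a torsion class in the heart of the lifted $t$-structure. I expect this to be $\mathfrak{G}(\mathcal{T}^{0})=\mathfrak{G}(\mathcal{T}^{\leq0})\cap\mathfrak{G}(\mathcal{T}^{\geq0})$, using the extendability results quoted from \cite{BCRPZ}. I also need the left-hand pair to be a $t$-structure, namely the lift of the HRS-tilt. For this I use that $\mu^{\mathrm{L}}_{\mathcal{X}}$ is again extendable with respect to $\mathfrak{B}$. This should follow because $\mathcal{T}^{\leq-1}\subseteq\mu^{\mathrm{L}}_{\mathcal{X}}\mathcal{T}^{\leq0}\subseteq\mathcal{T}^{\leq0}$, so the tilted aisle is sandwiched between shifts of the original one and inherits the metric compatibility.

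\textbf{Aisle inclusion.} Take $F=\mathrm{colim}\,\mathcal{Y}(E_i)$ with $E_i\in\mathcal{T}^{\leq-1}*\mathcal{X}$ Cauchy. I apply the truncation triangle of the original $t$-structure, $\tau^{\leq-1}E_i\to E_i\to H^0(E_i)\to$, whose third term lies in $\mathcal{X}$ since $\mathcal{X}$ is closed under quotients in $\mathcal{T}^0$. Extendability should make $\tau^{\leq-1}$ and $H^0$ preserve Cauchy sequences, at least after passing to a subsequence using goodness of the metric. Passing to colimits, the triangle in $\mathfrak{G}(\mathcal{T})$ (via Neeman's construction of triangles from Cauchy sequences of triangles) exhibits $F\in\mathfrak{G}(\mathcal{T}^{\leq-1})*\mathfrak{G}(\mathcal{X})$. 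Here $\mathfrak{G}(\mathcal{T}^{\leq-1})=\Sigma\mathfrak{G}(\mathcal{T}^{\leq0})$ is exactly the shifted lifted aisle.

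\textbf{Coaisle inclusion.} The argument is analogous, using the triangle $\Sigma\,\mathrm{f}E_i\to E_i\to\tau^{\geq 0}$-part. Here $\mathrm{f}$ is the torsion-free part with respect to $(\mathcal{X},\mathcal{X}^{\perp_{\mathcal{T}^0}})$, obtained from $H^{-1}(E_i)$ through its torsion sequence. I must then show that a lifted object of $\mathcal{X}^{\perp_{\mathcal{T}^0}}$ lies in $\mathfrak{G}(\mathcal{X})^{\perp}$ inside $\mathfrak{G}(\mathcal{T}^0)$. For this I will compute $\mathrm{Hom}(\mathrm{colim}\,\mathcal{Y}(X_j),\mathrm{colim}\,\mathcal{Y}(Y_i))$ as $\lim_j\mathrm{colim}_i\mathrm{Hom}(X_j,Y_i)$, which is zero. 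This uses the fact from \cite{N1} that maps out of Cauchy colimits into objects of $\mathfrak{G}$ are computed this way.

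\textbf{Main obstacle.} The hard step is functoriality. I need that truncation and torsion-part functors applied termwise to a Cauchy sequence yield Cauchy sequences, and that the resulting colimit triangles are distinguished in $\mathfrak{G}(\mathcal{T})$. My first attempt will use the good metric axiom $\Sigma^{\pm1}\mathfrak{B}_{n+1}\subseteq\mathfrak{B}_n$ together with extendability, in the form that $\mathfrak{B}_n$-objects have truncations in $\mathfrak{B}_{n-1}$. Combined with the octahedral axiom, this should show that the cones of $\tau^{\leq-1}E_i\to\tau^{\leq-1}E_{i+1}$ and of $H^0E_i\to H^0E_{i+1}$ lie in shifted balls. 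I would then reindex to absorb the shifts.
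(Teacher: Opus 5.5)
The gap is at the step you yourself call the main obstacle, and that step is where all the real content of the proof lies.

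\textbf{The proposed mechanism fails.} You suggest that extendability means objects of $\mathfrak{B}_n$ have truncations in $\mathfrak{B}_{n-1}$. That is not what extendability says: it only gives $\mathfrak{B}_k\subseteq\mathcal{T}^{\leq0}$ for a single $k$. The claim is also false in general. Take $\mathrm{D}^{\mathrm{b}}(\mathrm{mod}A)$ with $A=K[x]/(x^{2})$ and $\mathfrak{B}_n=\langle A\rangle^{[-\infty,-n]}$. The complex $A\xrightarrow{x}A$ in degrees $-n-1,-n$ lies in $\mathfrak{B}_n$. Its truncation $\tau^{\geq-n}$ is a shift of the simple module, which is not perfect and so lies in no $\mathfrak{B}_m$. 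An octahedral argument resting on truncation-closed balls therefore does not go through.

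\textbf{The missing idea.} No control of truncations of ball objects is needed, because the truncated sequences degenerate. By Remark~\ref{rmk0}, $\mathfrak{B}_l\subseteq\Sigma^{2}\mathfrak{B}_k\subseteq\mathcal{T}^{\leq-2}$ for $l\geq k+2$. So for $n\gg0$ the cone $Z_n$ of $e_n\colon E_n\to E_{n+1}$ lies in $\mathcal{T}^{\leq-2}$. Hence $H^{-1}(Z_n)=H^{0}(Z_n)=0$, and $H^{0}(e_n)$ is an isomorphism. Two things follow:
\begin{enumerate}
\item The $\mathcal{X}$-parts $X_n\cong H^{0}(E_n)$ are eventually constant, with colimit $Y(X_p)\in\mathfrak{G}(\mathcal{X})$.
\item The $3\times3$ lemma gives $\mathrm{Cone}(\tau^{\leq-1}e_n)\cong Z_n$, so $\{\tau^{\leq-1}E_n\}$ is Cauchy with literally the same cones.
\end{enumerate}
No subsequences or reindexing are needed. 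This is the paper's argument, phrased there as $\mathrm{Cone}(x_n)=0$ via $\mathrm{Hom}(\mathcal{T}^{\leq-2},\mathcal{T}^{\geq-1})=0$.

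\textbf{A further omission.} Before invoking Neeman's triangles, you must check that both colimits lie in $\mathfrak{C}(\mathcal{T})$. For $Y(X_p)$ this holds because it vanishes on $\mathfrak{B}_{k+1}\subseteq\mathcal{T}^{\leq-1}$. For $\mathrm{colim}\,Y(\tau^{\leq-1}E_n)$, note that on objects $T\in\mathfrak{B}_{k+2}$ it agrees with $F$, since $\mathrm{Hom}(T,X_n)=\mathrm{Hom}(T,\Sigma^{-1}X_n)=0$ there. It therefore inherits membership in $\mathfrak{C}(\mathcal{T})$ from $F$.

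\textbf{Comparison with the paper.} With that repaired, your architecture works and differs from the paper only in how it closes. You prove forward inclusions for both aisle and coaisle, then conclude via ${}^{\perp}(\Sigma^{-1}\mathcal{V}')\subseteq{}^{\perp}(\Sigma^{-1}\mathcal{V})$. The paper instead gets the reverse aisle inclusion for free. By Lemma~\ref{lem2} and Lemma~\ref{thm2}, $\mathfrak{G}(\mu^{\mathrm{L}}_{\mathcal{X}}\mathcal{T}^{\leq0})$ is an aisle, so it is extension-closed and contains $\mathfrak{G}(\mathcal{T}^{\leq-1})$ and $\mathfrak{G}(\mathcal{X})$. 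The paper then reads off the coaisles as perpendicular categories.

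Your route costs an extra coaisle inclusion, but the same observation makes it easy. It also makes the termwise torsion-free parts unnecessary; they are redundant anyway, since $H^{-1}(E_i)\in\mathcal{X}^{\perp_{\mathcal{T}^{0}}}$ already. Because $\mu^{\mathrm{L}}_{\mathcal{X}}\mathcal{T}^{\geq0}\subseteq\mathcal{T}^{\geq-1}$, the cones of a Cauchy sequence in it lie in $\mathcal{T}^{\geq-2}$ and eventually in $\mathcal{T}^{\leq-3}$, hence vanish. So $\mathfrak{G}(\mu^{\mathrm{L}}_{\mathcal{X}}\mathcal{T}^{\geq0})$ is just $Y(\mu^{\mathrm{L}}_{\mathcal{X}}\mathcal{T}^{\geq0})$. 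Now apply $Y$ to a triangle $\Sigma V\to E\to W\to\Sigma^{2}V$ with $V\in\mathcal{X}^{\perp_{\mathcal{T}^{0}}}$ and $W\in\mathcal{T}^{\geq0}$. Using $\mathrm{Hom}(Y(X),Y(V))=\mathrm{Hom}(X,V)=0$, the coaisle inclusion follows at once.
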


Thus, we have the following commutative diagram:
\[ \xymatrix{
  (\mathcal{T}^{\leq0}, \mathcal{T}^{\geq0}) \ar[r]^{T_{\mathrm{HRS}}^{\mathcal{X}}}\ar[d]^{\mathfrak{G}} &(\mu^{\mathrm{L}}_{\mathcal{X}}\mathcal{T}^{\leq0},\mu^{\mathrm{L}}_{\mathcal{X}}\mathcal{T}^{\geq0})   \ar[d]^{\mathfrak{G}}  \\
  (\mathfrak{G}(\mathcal{T}^{\leq0}), \mathfrak{G}(\mathcal{T}^{\geq0}))\ar[r]^{T_{\mathrm{HRS}}^{\mathfrak{G}(\mathcal{X})}} &  (\mathfrak{G}(\mu^{\mathrm{L}}_{\mathcal{X}}\mathcal{T}^{\leq0}), \mathfrak{G}(\mu^{\mathrm{L}}_{\mathcal{X}}\mathcal{T}^{\geq0})).
 }\]

This paper is organized as follows. In Section 2, we collect some basic definitions and preliminary results concerning $t$-structures and completions on triangulated categories. Section 3 is devoted to verify Theorem \ref{thmA}. In Section 4, we present some applications of Theorem \ref{thmA}.
\section{\bf Preliminaries}
In this section, we set notations and recall preliminary materials on $t$-structures and completions of triangulated categories.

\subsection{General notations and conventions}
\noindent

Throughout the paper, let $K$ be a field. For an additive category $\mathcal{T}$, we say that $\mathcal{T}$ is \emph{$K$-linear} if for any objects $X,Y\in\mathcal{T}$, $\mathrm{Hom}_{\mathcal{T}}(X,Y)$ is a $K$-vector space and the composition operation of morphisms is $K$-bilinear. Let $\mathcal{T}$ be a $K$-linear additive category. Recall that $\mathcal{T}$ is \emph{Hom-finite} if all morphism spaces of $\mathcal{T}$ are finite-dimensional over $K$. We say that $\mathcal{T}$ is \emph{Krull-Schmidt} if every object $M$ of $\mathcal{T}$ is isomorphic to $ M_{1}\oplus \cdots\oplus M_{n}$ with $M_{1}, \ldots, M_{n}$ having local endomorphism algebras. For a full subcategory $\mathcal{X}$ of $\mathcal{T}$, define subcategories $\mathcal{X}^{\perp}:=\mathcal{X}^{\perp_{\mathcal{T}}}$ and ${^{\perp}\mathcal{X}}:={^{\perp_{\mathcal{T}}}\mathcal{X}}$ of $\mathcal{T}$ as
\begin{center}
$\mathcal{X}^{\perp_{\mathcal{T}}}:=\{T\in\mathcal{T}~|~\mathrm{Hom}_{\mathcal{T}}(X,T)=0,~\forall~X\in\mathcal{X}\}$,\\
${^{\perp_{\mathcal{T}}}\mathcal{X}}:=\{T\in\mathcal{T}~|~\mathrm{Hom}_{\mathcal{T}}(X,T)=0,~\forall~X\in\mathcal{X}\}$.
\end{center}

Assume that $\mathcal{T}$ is a triangulated category with the shift functor $\Sigma$. We say that $\mathcal{T}$ is \emph{essentially small} if the isomorphism classes of objects in $\mathcal{T}$ form a set. For two full subcategories $\mathcal{X}$ and $\mathcal{Y}$ of $\mathcal{T}$, we denote by $\mathcal{X}\ast\mathcal{Y}$ the \emph{extension} of $\mathcal{X}$ and $\mathcal{Y}$, that is the full subcategory of $\mathcal{T}$ consisting of objects $Z$ which admit a triangle $X\rightarrow Z\rightarrow Y\rightarrow \Sigma X$ in $\mathcal{T}$ with $X\in\mathcal{X}$ and $Y\in\mathcal{Y}$. If $\mathcal{X}\ast\mathcal{X}\subseteq\mathcal{X}$, then $\mathcal{X}$ is said to be \emph{closed under extensions} in $\mathcal{T}$. We define $\mathrm{Hom}_{\mathcal{T}}(\mathcal{X},\mathcal{Y})$ as the union of $\mathrm{Hom}_{\mathcal{T}}(X,Y)$ for all $X\in\mathcal{X}$ and $Y\in\mathcal{Y}$. For a morphism $f:~X\rightarrow Y$ in $\mathcal{T}$, the third term $Z$ in a triangle $X\xrightarrow{f} Y\rightarrow Z\rightarrow \Sigma X$ in $\mathcal{T}$ is called the \emph{cone} of $f$ and is denoted by $\mathrm{Cone}(f)$. We denote by $\mathrm{thick}\mathcal{X}$ the \emph{thick subcategory} of $\mathcal{T}$ generated by $\mathcal{X}$, that is the smallest triangulated subcategory of $\mathcal{T}$ containing $\mathcal{X}$ and closed under taking direct summands. For more knowledge on triangulated categories, we refer the reader to \cite{H, N0}.

\subsection{$t$-structures on triangulated categories}
\noindent

We collect some definitions and facts from \cite{BBD}. Let $\mathcal{T}$ be a triangulated category with shift functor $\Sigma$.

\begin{df}
$($\cite[Definition 1.3.1]{BBD}$)$\label{dfn0} A pair $(\mathcal{T}^{\leq0},\mathcal{T}^{\geq0})$ of full subcategories of $\mathcal{T}$ is called a \emph{$t$-structure} on $\mathcal{T}$ if the following conditions are satisfied:

(t1) $\mathcal{T}^{\leq-1}\subseteq\mathcal{T}^{\leq0}$ and $\mathcal{T}^{\geq0}\subseteq\mathcal{T}^{\geq-1}$, where $\mathcal{T}^{\leq n}:=\Sigma^{-n}\mathcal{T}^{\leq0}$ and $\mathcal{T}^{\geq n}:=\Sigma^{-n}\mathcal{T}^{\geq0}$ for any $n\in\mathbb{Z}$;

(t2) $\mathrm{Hom}_{\mathcal{T}}(\mathcal{T}^{\leq-1},\mathcal{T}^{\geq0})=0$;

(t3) For each $X\in\mathcal{T}$, there exists a triangle $X^{'}\rightarrow X\rightarrow X^{''}\rightarrow \Sigma X^{'}$ in $\mathcal{T}$ with $X^{'}\in\mathcal{T}^{\leq-1}$ and $X^{''}\in\mathcal{T}^{\geq0}$ $($i.e. $\mathcal{T}=\mathcal{T}^{\leq-1}\ast\mathcal{T}^{\geq0})$.
\end{df}

The categories $\mathcal{T}^{\leq0}$ and $\mathcal{T}^{\geq0}$ are called the \emph{aisle} and the \emph{coaisle} of the $t$-structure $(\mathcal{T}^{\leq0},\mathcal{T}^{\geq0})$, respectively.  The category $\mathcal{T}^{0}:=\mathcal{T}^{\leq0}\bigcap\mathcal{T}^{\geq0}$ is called the \emph{heart} of the $t$-structure $(\mathcal{T}^{\leq0},\mathcal{T}^{\geq0})$, and $\mathcal{T}^{0}$ is an abelian category (see \cite[Theorem 1.3.6]{BBD}).

It is easy to see that for every integer $n$, the pair $(\mathcal{T}^{\leq n},\mathcal{T}^{\geq n})$ is also a $t$-structure with the heart $\mathcal{T}^{n}:=\mathcal{T}^{\leq n}\bigcap\mathcal{T}^{\geq n}$. Note that $\mathcal{T}^{\leq n}$ and $\mathcal{T}^{\geq n}$ are additive subcategories, which are closed under extensions and direct summands. By the conditions (t1) and (t2) in Definition \ref{dfn0}, for the $t$-structure $(\mathcal{T}^{\leq n},\mathcal{T}^{\geq n})$, we have
\begin{center}
$(\mathcal{T}^{\leq n})^{\perp}=\mathcal{T}^{\geq n+1}$ and ${^{\perp}(\mathcal{T}^{\geq n+1})}=\mathcal{T}^{\leq n}$.
\end{center}

Up to isomorphism, for each $X\in\mathcal{T}$, there exists a unique triangle $X^{\leq n-1}\rightarrow X\rightarrow X^{\geq n}\rightarrow \Sigma X^{\leq n-1}$ in $\mathcal{T}$ with $X^{\leq n-1}\in\mathcal{T}^{\leq n-1}$ and $X^{\geq n}\in\mathcal{T}^{\geq n}$. So the correspondences $X\mapsto X^{\leq n-1}$ and $X\mapsto X^{\geq n}$ extend to functors $\sigma^{\leq n-1}:\mathcal{T}\rightarrow\mathcal{T}^{\leq n-1}$ and $\sigma^{\geq n}:\mathcal{T}\rightarrow\mathcal{T}^{\geq n}$, respectively, which are called the \emph{truncation functors}. The functor $\sigma^{\leq n-1}$ is a right adjoint to the inclusion $\mathcal{T}^{\leq n-1}\rightarrow\mathcal{T}$, and the functor $\sigma^{\geq n}$ is a left adjoint to the inclusion $\mathcal{T}^{\geq n}\rightarrow\mathcal{T}$.

Let
\begin{center}
$\mathcal{T}^{-}:=\bigcup\limits_{n\in\mathbb{N}}\mathcal{T}^{\leq n}$, $\mathcal{T}^{+}:=\bigcup\limits_{n\in\mathbb{N}}\mathcal{T}^{\geq -n}$ and $\mathcal{T}^{b}:=\mathcal{T}^{-}\bigcap\mathcal{T}^{+}.$
\end{center}We say the $t$-structure $(\mathcal{T}^{\leq0},\mathcal{T}^{\geq0})$ is \emph{bounded above}, \emph{bounded below} or \emph{bounded} if $\mathcal{T}=\mathcal{T}^{-}, \mathcal{T}^{+} \mathrm{or}~\mathcal{T}^{b}$, respectively. We denote by $t$-$\mathrm{str}(\mathcal{T})$ the set of bounded $t$-structures on $\mathcal{T}$.

\subsection{Completions of triangulated categories}
\noindent

Let $\mathcal{T}$ be an essentially small triangulated category with shift functor $\Sigma$. In this subsection, we recall the definition of completions of triangulated categories. For details, we refer to \cite{N1, N2, BCRPZ}.

\begin{df}$($\cite[Definition 10]{N2}$)$ A sequence $\mathfrak{B}:=\{\mathfrak{B}_{n}\}_{n\in\mathbb{N}}$ of full subcategories of $\mathcal{T}$ is called a \emph{metric} on $\mathcal{T}$ if $\mathfrak{B}_{n}*\mathfrak{B}_{n}=\mathfrak{B}_{n}$ and $0\in\mathfrak{B}_{n}$ for all $n\in\mathbb{N}$.

Moreover, a metric $\mathfrak{B}=\{\mathfrak{B}_{n}\}_{n\in\mathbb{N}}$ is called a \emph{good metric} on $\mathcal{T}$ if $\Sigma^{-1}\mathfrak{B}_{n+1}\bigcup\mathfrak{B}_{n+1}\bigcup\Sigma\mathfrak{B}_{n+1}\subseteq\mathfrak{B}_{n}$ for all $n\in\mathbb{N}$.

The metric $\mathfrak{B}$ is said to be \emph{finer} than another metric $\mathfrak{N}:=\{\mathfrak{N}_{n}\}_{n\in\mathbb{N}}$ if for each $n$, there exists an $m\in\mathbb{N}$ such that $\mathfrak{B}_{m}\subseteq\mathfrak{N}_{n}$. We denote this by $\mathfrak{B}\leq\mathfrak{N}$. The metrics $\mathfrak{B}$ and $\mathfrak{N}$ are said to be \emph{equivalent} if $\mathfrak{B}\leq\mathfrak{N}\leq\mathfrak{B}$.
\end{df}

\begin{rmk}\label{rmk0} By the definition of good metric, we obtain $\mathfrak{B}_{n+|j|}\subseteq\Sigma^{j}\mathfrak{B}_{n}$ for any $j\in\mathbb{Z}$.
\end{rmk}

\begin{ex}\label{exa2} Let $\mathcal{X}$ be a full subcategory of $\mathcal{T}$ and $(\mathcal{T}^{\leq0},\mathcal{T}^{\geq0})$ a $t$-structure on $\mathcal{T}$. Put $\mathfrak{B}_{n}:=\mathcal{X}\bigcap\mathcal{T}^{\le-n}$ for any $n\in\mathbb{N}$. Then $\{\mathfrak{B}_{n}\}_{n\in\mathbb{N}}$ is a good metric on $\mathcal{X}$ and it is called \emph{the good metric induced from the aisle of $(\mathcal{T}^{\leq0},\mathcal{T}^{\geq0})$} by \cite[Definition 2.9]{BCRPZ}.
\end{ex}

\begin{df}$($\cite[Definition 1.6]{N1}$)$ Let $\mathfrak{B}$ be a good metric of $\mathcal{T}$. A chain of morphisms $\{X_{\bullet},f_{\bullet}\}:~X_{0}\xrightarrow{f_{1}}X_{1}\xrightarrow{f_{2}}X_{2}\xrightarrow{f_{3}}X_{3}\xrightarrow{}\cdots$ in $\mathcal{T}$ is called a \emph{Cauchy sequence} with respect to $\mathfrak{B}$ if for any $i\geq1$, there exists $n_{i}\geq1$ such that $\mathrm{Cone}(f_{j})\in\mathfrak{B}_{i}$ for all $j\geq n_{i}$.
\end{df}

\begin{rmk} Equivalent good metrics yield the same Cauchy sequence.
\end{rmk}

We denote by Mod-$\mathcal{T}$ the category of additive contravariant functions from $\mathcal{T}$ to the category of abelian groups, and let
\begin{center}
$Y:\mathcal{T}\rightarrow \text{Mod-}\mathcal{T}$
\end{center}the Yoneda embedding $T\mapsto\mathrm{Hom}_{\mathcal{T}}(-,T)$. We note that the Yoneda embedding $Y$ is fully faithful.

\begin{df}$($\cite[Definition 1.10]{N1}$)$\label{dfn1} Let $\mathfrak{B}$ be a good metric of $\mathcal{T}$, and $\mathcal{X}$ a full subcategory of $\mathcal{T}$. The full subcategories $\mathfrak{L}(\mathcal{X}),\mathfrak{C}(\mathcal{T})$ and $\mathfrak{G}(\mathcal{X})$ of Mod-$\mathcal{T}~($all of them with respect to $\mathfrak{B})$ are defined as follows:

$(1)$ The objects of  $\mathfrak{L}(\mathcal{X})$ are the functors $F$ in Mod-$\mathcal{T}$ such that $F\simeq \displaystyle\operatorname*{colim}_{n\to} Y(X_{n})$, where $\{X_{\bullet}\}$ is a Cauchy sequence in $\mathcal{T}$ with $X_{n}\in\mathcal{X}$ for all $n\in\mathbb{N}$.

$(2)$ The objects of  $\mathfrak{C}(\mathcal{T})$ are the functors $F$ in Mod-$\mathcal{T}$, they are given by the formula
\begin{center}
$\mathfrak{C}(\mathcal{T})=
\{F\in\text{Mod-}\mathcal{T}|\bigcap\limits_{j\in\mathbb{Z}}\bigcup\limits_{i\in\mathbb{N}}\mathrm{Hom}(Y(\Sigma^{j}\mathfrak{B}_{i}),F)\}$.
\end{center}
It implies that $F(\mathfrak{B}_{i})=0$ for some $i\geq0$.

$(3)$ $\mathfrak{G}(\mathcal{X}):=\mathfrak{L}(\mathcal{X})\bigcap\mathfrak{C}(\mathcal{T})$ and it is called the \emph{completion} of $\mathcal{T}$ with respect to $\mathfrak{B}$.
\end{df}

\begin{rmk} For a full subcategory $\mathcal{X}$ of $\mathcal{T}$, we have inclusion relation
\begin{center}
$Y(\mathcal{X})\subseteq\mathfrak{L}(\mathcal{X})$.
\end{center}
\end{rmk}

There is an autoequivalence
\begin{align*}
&\Sigma:\text{Mod-}\mathcal{T}\rightarrow\text{Mod-}\mathcal{T}\\
&\qquad{F\mapsto[\Sigma(F):T\mapsto F(\Sigma^{-1}T)]}
\end{align*}
for $F\in\text{Mod-}\mathcal{T}$ and $T\in\mathcal{T}$. It is easy to verify that $\Sigma^{j}(Y(T))\simeq Y(\Sigma^{j}T)$ for any $j\in\mathbb{Z}$.

\begin{lem}$($\cite[Theorem 2.11]{N1} $\mathrm{or}$ \cite[Theorem 15]{N2}$)$\label{thm1}
Let $\mathcal{T}$ be an essentially small triangulated category with a good metric $\mathfrak{B}$. Then the category $\mathfrak{G}(\mathcal{T})$ with its autoequivalence $\Sigma$ is triangulated, where the triangles are given by the sequences $A\xrightarrow{\alpha} B\xrightarrow{\beta} C\xrightarrow{\gamma}\Sigma A$ in $\mathfrak{G}(\mathcal{T})$ which are isomorphic to the colimit of the image under Yoneda functor $Y$ of a Cauchy sequence (with respect to $\mathfrak{B}$) of triangles in $\mathcal{T}$:
$$\{A_{\bullet}\}\xrightarrow{\alpha_{\bullet}}\{B_{\bullet}\}\xrightarrow{\beta_{\bullet}}\{C_{\bullet}\}\xrightarrow{\gamma_{\bullet}}\Sigma\{A_{\bullet}\}.$$
\end{lem}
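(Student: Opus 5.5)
We need to prove the statement "Lemma thm1", which is Neeman's theorem that $\mathfrak{G}(\mathcal{T})$ is triangulated. Since it is cited from the literature, we sketch Neeman's proof.

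The plan is to realize $\mathfrak{G}(\mathcal{T})$ inside a category where colimits of Cauchy sequences behave well, and to transfer the triangulated axioms from $\mathcal{T}$ along colimits.

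\textbf{Step 1: representatives and morphisms.} We show that every object of $\mathfrak{G}(\mathcal{T})$ is $\operatorname*{colim} Y(X_n)$ for a Cauchy sequence $\{X_\bullet\}$. We then show that every morphism $\operatorname*{colim} Y(A_n)\to \operatorname*{colim} Y(B_n)$ lifts, after passing to subsequences, to a map of Cauchy sequences. The key input is the defining property of $\mathfrak{C}(\mathcal{T})$: an object $F\in\mathfrak{C}(\mathcal{T})$ kills $\Sigma^j\mathfrak{B}_i$ for $i\gg0$. Consequently, for a Cauchy sequence $\{B_\bullet\}$ with colimit in $\mathfrak{C}(\mathcal{T})$, the maps $Y(B_n)\to \operatorname*{colim}$ are eventually "almost iso" on the relevant test objects. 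Using the triangles $B_n\to B_m\to \mathrm{Cone}\to$ with cones in $\mathfrak{B}_i$, and the good-metric shift stability $\mathfrak{B}_{n+|j|}\subseteq\Sigma^j\mathfrak{B}_n$ (Remark \ref{rmk0}), we obtain that $\mathrm{Hom}(Y(A),\operatorname*{colim} Y(B_n))=\operatorname*{colim}\mathrm{Hom}(A,B_n)$, which gives the lifting.

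\textbf{Step 2: shift and distinguished triangles.} The autoequivalence $\Sigma$ of Mod-$\mathcal{T}$ preserves both $\mathfrak{L}(\mathcal{T})$ and $\mathfrak{C}(\mathcal{T})$, by goodness of the metric. We define the distinguished triangles as the colimits of Cauchy sequences of triangles, as in the statement. The axioms are then checked in turn:
\begin{enumerate}
\item \emph{Rotation} is immediate levelwise.
\item \emph{Existence of a triangle on a morphism} $\alpha$: lift $\alpha$ to $\alpha_\bullet$ by Step 1 and take levelwise cones. The octahedral axiom in $\mathcal{T}$ shows that the cones form a Cauchy sequence, since the cone of $C_n\to C_{n+1}$ is an extension of the cones of $A_n\to A_{n+1}$ and $B_n\to B_{n+1}$ (up to shift), and $\mathfrak{B}_i*\mathfrak{B}_i=\mathfrak{B}_i$. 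We must also check that the colimit lies in $\mathfrak{C}(\mathcal{T})$; this uses the long exact sequences together with the vanishing of $A$ and $B$ on $\Sigma^j\mathfrak{B}_i$.
\item \emph{The morphism axiom (TR3)} and the \emph{octahedral axiom} are handled by lifting the diagrams to Cauchy sequences, applying the axioms levelwise, and then reindexing.
\end{enumerate}

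\textbf{Main obstacle.} The main difficulty is well-definedness and closure under isomorphism. Two different Cauchy liftings of the same morphism, or levelwise fill-ins that are not compatible with the transition maps, must give isomorphic colimits. Non-functoriality of cones is the crux here. Following Neeman, the approach is to show that a sequence $\operatorname*{colim} Y(A_n)\to\operatorname*{colim} Y(B_n)\to\operatorname*{colim} Y(C_n)$ coming from Cauchy triangles is exact and "homologically" determined. An exact sequence in $\mathfrak{G}(\mathcal{T})$ with two terms fixed then determines the third up to non-unique isomorphism. This is obtained via a five-lemma argument in Mod-$\mathcal{T}$, after passing to suitable subsequences, so that the comparison maps can be chosen compatibly modulo maps factoring through $\mathfrak{B}_i$, which vanish in the colimit.
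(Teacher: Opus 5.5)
The paper gives no argument for this lemma; it is quoted from Neeman. Your sketch therefore has to stand on its own as a reconstruction of his proof, and as written it omits the technical input that makes that proof work.

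Your Step~1 puts the metric in the wrong place. The identity $\mathrm{Hom}(Y(A),\operatorname{colim}Y(B_n))\cong\operatorname{colim}\mathrm{Hom}_{\mathcal{T}}(A,B_n)$ is just Yoneda plus the fact that colimits in $\text{Mod-}\mathcal{T}$ are computed objectwise. Lifting a morphism $\operatorname{colim}Y(A_n)\to\operatorname{colim}Y(B_m)$ to a map of sequences after reindexing is equally formal, since this Hom group is $\varprojlim_n\operatorname{colim}_m\mathrm{Hom}_{\mathcal{T}}(A_n,B_m)$. Neither fact uses $\mathfrak{C}(\mathcal{T})$ or the goodness of $\mathfrak{B}$.

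What is actually needed is a \emph{stabilization lemma}. Let $F\in\mathfrak{G}(\mathcal{T})$ and let $\{E_\bullet,e_\bullet\}$ be Cauchy. Then $F$ is cohomological, being a sequential colimit of representables, and $F$ kills $\mathfrak{B}_i\cup\Sigma^{-1}\mathfrak{B}_i$ for $i\gg0$. Applying $F$ to $E_n\to E_{n+1}\to\mathrm{Cone}(e_n)\to\Sigma E_n$ shows that $F(e_n)\colon F(E_{n+1})\to F(E_n)$ is bijective for $n\gg0$. Hence $\mathrm{Hom}(\operatorname{colim}Y(E_n),F)\to F(E_N)$ is bijective for all large $N$.

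Without this lemma, your treatment of TR3 and TR4 (``apply the axioms levelwise, then reindex'') fails. Write $a_n,b_n,c_n,c'_n$ for the transition maps. Levelwise fill-ins $h_n\colon C_n\to C'_n$ need not commute with the transitions. The defect $h_{n+1}c_n-c'_nh_n$ vanishes on $\beta_n$, so it factors through $\gamma_n\colon C_n\to\Sigma A_n$, not through an object of $\mathfrak{B}_i$. Its class in $C'(C_n)$ is in general nonzero: already for constant sequences you may change $h_n$ by $\beta u_n\gamma$ independently at each level. So no reindexing removes it. Your proposed remedy does not repair this either. Exactness alone does not determine the third term; one needs an actual morphism of triangles, which is precisely what TR3 has to supply.

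The correct device is to work at a single level. Choose a fill-in $h_N$ for one large $N$. By stabilization its class in $C'(C_N)\cong\mathrm{Hom}(C,C')$ defines $h\colon C\to C'$. The two squares commute because they can be tested in $C'(B_N)\cong\mathrm{Hom}(B,C')$ and $(\Sigma A')(C_N)\cong\mathrm{Hom}(C,\Sigma A')$, where they hold on the nose. The five lemma in $\text{Mod-}\mathcal{T}$, applied to the long exact colimit sequences, then gives uniqueness of cones. TR4 is obtained the same way: take one octahedron in $\mathcal{T}$ at a large level, and compare it with a Cauchy sequence of triangles built on a lift of the induced map $\mathrm{Cone}(f)\to\mathrm{Cone}(gf)$, using a single-level isomorphism of triangles together with stabilization.

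A smaller point: in TR1 the transition maps between the cones must be chosen via the $3\times3$ lemma. For an arbitrary fill-in there is no reason for its cone to lie in $\mathrm{Cone}(b_n)*\Sigma\mathrm{Cone}(a_n)$.
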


\begin{lem}$($\cite[Lemma 2.14]{BCRPZ}$)$
$(1)$ For any full subcategory $\mathcal{X}$ of $\mathcal{T}$ and for any $j\in\mathbb{Z}$, there are equalities of additive categories:
\begin{center}
$\mathfrak{L}(\Sigma^{j}\mathcal{X})=\Sigma^{j}\mathfrak{L}(\mathcal{X})$, $\mathfrak{C}(\Sigma^{j}\mathcal{T})=\Sigma^{j}\mathfrak{C}(\mathcal{T})$ and $\mathfrak{G}(\Sigma^{j}\mathcal{X})=\Sigma^{j}\mathfrak{G}(\mathcal{X})$.
\end{center}
$(2)$ The functor $\Sigma$ restricts to automorphisms of additive categories $\mathfrak{L}(\mathcal{X}),\mathfrak{C}(\mathcal{T})$ and $\mathfrak{G}(\mathcal{X})$.\\
$(3)$ Equivalent good metrics of $\mathcal{T}$ yield the same $\mathfrak{L}(\mathcal{X}),\mathfrak{C}(\mathcal{T})$ and $\mathfrak{G}(\mathcal{X})$.
\end{lem}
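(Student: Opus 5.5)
The statement to prove is the last lemma displayed, Lemma 2.14 of \cite{BCRPZ}: compatibility of $\mathfrak{L}$, $\mathfrak{C}$, $\mathfrak{G}$ with shifts, and invariance under equivalent metrics. The plan is to verify each part directly from Definition \ref{dfn1}, using that $\Sigma$ on Mod-$\mathcal{T}$ is an autoequivalence. The main tools are the isomorphism $\Sigma^{j}Y(T)\simeq Y(\Sigma^{j}T)$ and the fact that an autoequivalence commutes with colimits.

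For $\mathfrak{L}$, let $F\in\mathfrak{L}(\mathcal{X})$, so $F\simeq\operatorname*{colim}Y(X_n)$ with $\{X_\bullet,f_\bullet\}$ Cauchy and $X_n\in\mathcal{X}$. Then
\[
\Sigma^{j}F\simeq\operatorname*{colim}\Sigma^{j}Y(X_n)\simeq\operatorname*{colim}Y(\Sigma^{j}X_n).
\]
It remains to check that $\{\Sigma^{j}X_\bullet,\Sigma^{j}f_\bullet\}$ is again Cauchy. We have $\mathrm{Cone}(\Sigma^{j}f_n)\simeq\Sigma^{j}\mathrm{Cone}(f_n)$. By Remark \ref{rmk0}, $\mathfrak{B}_{i+|j|}\subseteq\Sigma^{-j}\mathfrak{B}_i$, so $\Sigma^{j}\mathfrak{B}_{i+|j|}\subseteq\mathfrak{B}_i$. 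Hence for a given $i$, choosing $n$ large enough that $\mathrm{Cone}(f_n)\in\mathfrak{B}_{i+|j|}$ gives $\mathrm{Cone}(\Sigma^{j}f_n)\in\mathfrak{B}_i$. This shows $\Sigma^{j}\mathfrak{L}(\mathcal{X})\subseteq\mathfrak{L}(\mathcal{X})$ when $\mathcal{X}$ is stable under $\Sigma^{j}$. For general $\mathcal{X}$ the same argument gives $\Sigma^{j}\mathfrak{L}(\mathcal{X})\subseteq\mathfrak{L}(\Sigma^{j}\mathcal{X})$. Applying it with $-j$ and $\Sigma^{j}\mathcal{X}$ yields the reverse inclusion, hence equality.

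For $\mathfrak{C}$, note $\Sigma^{j}\mathcal{T}=\mathcal{T}$, so the claim is that $\mathfrak{C}(\mathcal{T})$ is $\Sigma$-stable. By Yoneda, $F\in\mathfrak{C}(\mathcal{T})$ means: for every $j'$ there is an $i$ with $F(\Sigma^{j'}\mathfrak{B}_i)=0$. Since $(\Sigma^{j}F)(\Sigma^{j'}B)=F(\Sigma^{j'-j}B)$, this condition for $\Sigma^{j}F$ is just a reindexing of the condition for $F$. The equality for $\mathfrak{G}$ then follows by intersecting the two equalities. Part (2) follows from (1): $\Sigma$ is an autoequivalence of Mod-$\mathcal{T}$ that maps each of $\mathfrak{L}(\mathcal{X})$, $\mathfrak{C}(\mathcal{T})$, $\mathfrak{G}(\mathcal{X})$ onto the corresponding category (with $\mathcal{X}$ replaced by $\Sigma\mathcal{X}$ where relevant), and it is additive.

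For (3), let $\mathfrak{B}$ and $\mathfrak{N}$ be equivalent. Cauchy sequences coincide by the preceding remark, so the $\mathfrak{L}$'s agree. For $\mathfrak{C}$, suppose that for each $j$ some $i$ has $F(\Sigma^{j}\mathfrak{B}_i)=0$. For a given $i$ choose $m$ with $\mathfrak{N}_m\subseteq\mathfrak{B}_i$. Then $F(\Sigma^{j}\mathfrak{N}_m)=0$, and the argument is symmetric.

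The only delicate point is the bookkeeping of the index shift $i\mapsto i+|j|$ in the Cauchy condition, and the direction of Remark \ref{rmk0}. I expect this step to need the most care.
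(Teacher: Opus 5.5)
The paper does not prove this lemma; it quotes it from \cite[Lemma 2.14]{BCRPZ}, so there is no in-paper argument to compare yours with. On its own merits, your direct verification from Definition~\ref{dfn1} is correct.

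\begin{itemize}
\item The $\mathfrak{L}$ part is right. You use Remark~\ref{rmk0} in the correct direction, $\Sigma^{j}\mathfrak{B}_{i+|j|}\subseteq\mathfrak{B}_{i}$, and rerunning the argument with $-j$ and $\Sigma^{j}\mathcal{X}$ correctly gives the reverse inclusion.
\item The $\mathfrak{C}$ part is a pure reindexing, because the paper's definition of $\mathfrak{C}(\mathcal{T})$ already contains $\bigcap_{j\in\mathbb{Z}}$. With Neeman's original $\bigcup_{i}Y(\mathfrak{B}_{i})^{\perp}$ you would instead use $\Sigma^{-j}\mathfrak{B}_{i+|j|}\subseteq\mathfrak{B}_{i}$. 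For good metrics the two definitions agree.
\item Part (3) is fine.
\item Two steps you leave implicit deserve a line each. First, $\Sigma^{j}\mathfrak{L}(\mathcal{X})\cap\Sigma^{j}\mathfrak{C}(\mathcal{T})=\Sigma^{j}\bigl(\mathfrak{L}(\mathcal{X})\cap\mathfrak{C}(\mathcal{T})\bigr)$ holds because both subcategories are closed under isomorphism and $\Sigma^{j}$ is invertible. Second, the $\mathfrak{G}$-clause of (3) follows by intersecting.
\end{itemize}

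The one point you should state explicitly concerns (2). Read literally for an arbitrary full subcategory $\mathcal{X}$, it is false. Take an extendable $t$-structure with nonzero heart and $\mathcal{X}=\mathcal{T}^{0}$. Lemma~\ref{thm2}(2) shows $\mathfrak{G}(\mathcal{T}^{0})$ is the essential image of $Y|_{\mathcal{T}^{0}}$. If $\Sigma Y(H)\simeq Y(\Sigma H)$ lay in it for some $0\neq H\in\mathcal{T}^{0}$, then Yoneda would give $\Sigma H\cong H'$ with $H'\in\mathcal{T}^{0}\subseteq\mathcal{T}^{\geq0}$. But $\Sigma H\in\mathcal{T}^{\leq-1}$, so this isomorphism is zero and $H=0$, a contradiction.

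What your argument actually proves is the correct version: $\Sigma$ restricts to isomorphisms $\mathfrak{L}(\mathcal{X})\to\mathfrak{L}(\Sigma\mathcal{X})$ and $\mathfrak{G}(\mathcal{X})\to\mathfrak{G}(\Sigma\mathcal{X})$, and to an automorphism of $\mathfrak{C}(\mathcal{T})$. This gives automorphisms of $\mathfrak{L}(\mathcal{X})$ and $\mathfrak{G}(\mathcal{X})$ exactly when $\Sigma\mathcal{X}=\mathcal{X}$, for instance $\mathcal{X}=\mathcal{T}$. Say this outright rather than absorbing it into the phrase ``with $\mathcal{X}$ replaced by $\Sigma\mathcal{X}$ where relevant.''
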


\begin{df}$($\cite[Definition 3.2]{BCRPZ}$)$ Let $\mathfrak{B}$ be a metric of $\mathcal{T}$. A $t$-structure $(\mathcal{T}^{\leq0},\mathcal{T}^{\geq0})$ on $\mathcal{T}$ is called \emph{extendable} with respect to the metric $\mathfrak{B}$ if there exists a natural number $n$ such that $\mathfrak{B}_{n}\subseteq\mathcal{T}^{\leq0}$.
\end{df}
We note that for an extendable $t$-structure $(\mathcal{T}^{\leq0},\mathcal{T}^{\geq0})$, the metric $\mathfrak{B}$ is finer than the good metric $\{\mathcal{T}^{\leq-n}\}_{n\in\mathbb{N}}$ on $\mathcal{T}$.

\begin{lem}$($\cite[Theorem 3.5]{BCRPZ}$)$\label{thm2} Let $\mathfrak{B}$ be a good metric of $\mathcal{T}$, and $(\mathcal{T}^{\leq0},\mathcal{T}^{\geq0})$ be an extendable $t$-structure on $\mathcal{T}$ with respect to $\mathfrak{B}$. Then the following statements hold:

$(1)$ The pair $(\mathfrak{G}(\mathcal{T}^{\leq0}),\mathfrak{G}(\mathcal{T}^{\geq0}))$ is a $t$-structure on $\mathfrak{G}(\mathcal{T})$ with the heart $Y(\mathcal{T}^{0})$.

$(2)$ Let $\mathcal{X}$ be any full subcategory of $\mathcal{T}^{\geq0}$. Then the restriction of the Yoneda functor $Y$ to $\mathcal{X}$ yields an equivalence $Y|_{\mathcal{X}}:\mathcal{X}\rightarrow\mathfrak{G}(\mathcal{X})$ of additive categories.

$(3)$ If the $t$-structure $(\mathcal{T}^{\leq0},\mathcal{T}^{\geq0})$ is bounded above, then so is the $t$-structure $(\mathfrak{G}(\mathcal{T}^{\leq0}),\mathfrak{G}(\mathcal{T}^{\geq0}))$.
\end{lem}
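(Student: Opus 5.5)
The plan is to reduce everything to one stabilization phenomenon: under extendability, a Cauchy sequence becomes eventually constant after coaisle truncation. Fix $n$ with $\mathfrak{B}_{n}\subseteq\mathcal{T}^{\leq0}$. By Remark \ref{rmk0}, $\mathfrak{B}_{n+m}\subseteq\Sigma^{m}\mathfrak{B}_{n}\subseteq\mathcal{T}^{\leq -m}$ for all $m\ge 0$. Hence for any Cauchy sequence $\{X_\bullet,f_\bullet\}$ and any $k$, we have $\mathrm{Cone}(f_j)\in\mathcal{T}^{\leq k-2}$ for $j\gg0$.

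\textbf{The stabilization step.} Applying $\sigma^{\geq k}$ and using that both $\mathrm{Cone}(f_j)$ and $\Sigma^{-1}\mathrm{Cone}(f_j)$ lie in $\mathcal{T}^{\leq k-1}$, the map $\sigma^{\geq k}f_j$ is an isomorphism for $j\gg0$. A $3\times3$ (octahedral) argument on the truncation triangles then shows $\mathrm{Cone}(\sigma^{\leq k-1}f_j)\simeq \mathrm{Cone}(f_j)$ for $j\gg 0$. So $\{\sigma^{\leq k-1}X_\bullet\}$ is again Cauchy, and $\{\sigma^{\geq k}X_\bullet\}$ is eventually constant.

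\textbf{Part (2).} Let $\{X_\bullet\}$ be a Cauchy sequence in $\mathcal{X}\subseteq\mathcal{T}^{\geq0}$. Each $\mathrm{Cone}(f_j)$ lies in $\mathcal{T}^{\geq -1}$. It also lies in $\mathcal{T}^{\leq -2}$ for large $j$, so it vanishes and $f_j$ is eventually an isomorphism. Thus $\operatorname{colim}Y(X_j)\simeq Y(X_N)$. Conversely, for $X\in\mathcal{T}^{\geq0}$ we get $\mathrm{Hom}(Y(\Sigma^j\mathfrak{B}_i),Y(X))=\mathrm{Hom}(\Sigma^j\mathfrak{B}_i,X)=0$ once $\Sigma^j\mathfrak{B}_i\subseteq\mathcal{T}^{\leq-1}$, so $Y(X)\in\mathfrak{C}(\mathcal{T})$. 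Together with full faithfulness of the Yoneda embedding, this gives $Y|_{\mathcal{X}}:\mathcal{X}\xrightarrow{\sim}\mathfrak{G}(\mathcal{X})$. In particular $\mathfrak{G}(\mathcal{T}^{\geq0})=Y(\mathcal{T}^{\geq0})$.

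\textbf{Part (1).} I would verify the axioms in turn.
\begin{enumerate}
\item[(t1)] The inclusion $\mathcal{T}^{\leq-1}\subseteq\mathcal{T}^{\leq0}$ gives $\mathfrak{L}(\mathcal{T}^{\leq -1})\subseteq \mathfrak{L}(\mathcal{T}^{\leq0})$. Lemma 2.14 identifies $\mathfrak{G}(\mathcal{T}^{\leq -1})=\Sigma\mathfrak{G}(\mathcal{T}^{\leq0})$, and similarly on the coaisle side.
\item[(t2)] For $A=\operatorname{colim}Y(A_j)$ with $A_j\in\mathcal{T}^{\leq-1}$ and $B\in\mathcal{T}^{\geq0}$, the Yoneda lemma gives $\mathrm{Hom}(A,Y(B))=\lim\mathrm{Hom}(A_j,B)=0$.
\item[(t3)] For $F\simeq\operatorname{colim}Y(X_j)$ in $\mathfrak{G}(\mathcal{T})$, the truncation triangles $\sigma^{\leq-1}X_j\to X_j\to\sigma^{\geq0}X_j\to$ form a Cauchy sequence of triangles, by the stabilization step. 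Lemma \ref{thm1} turns this into a triangle $A\to F\to Y(\sigma^{\geq0}X_N)\to\Sigma A$ in $\mathfrak{G}(\mathcal{T})$, with $A\in\mathfrak{L}(\mathcal{T}^{\leq-1})\cap\mathfrak{C}(\mathcal{T})$.
\item[Heart.] Clearly $Y(\mathcal{T}^0)$ lies in both halves. Conversely, suppose $Y(X)$ with $X\in\mathcal{T}^{\geq0}$ is a colimit of a Cauchy sequence in $\mathcal{T}^{\leq0}$. Then $\mathrm{Hom}(X,\mathcal{T}^{\geq1})=0$ by the same limit computation as in (t2), so $X\in{}^{\perp}\mathcal{T}^{\geq1}=\mathcal{T}^{\leq0}$.
\end{enumerate}

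\textbf{Part (3).} Take $F=\operatorname{colim}Y(X_j)$ with $X_0\in\mathcal{T}^{\leq k}$. The cones are eventually in $\mathcal{T}^{\leq0}$, and there are only finitely many earlier cones, each in some $\mathcal{T}^{\leq k_j}$. Since aisles are extension-closed, all $X_j$ lie in a common $\mathcal{T}^{\leq k'}$, so $F\in\mathfrak{G}(\mathcal{T}^{\leq k'})$.

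I expect the main obstacle to be (t3): checking carefully that the truncated sequences assemble into a Cauchy sequence of \emph{triangles} in the sense of Lemma \ref{thm1}, i.e. that the truncation maps are compatible with the $f_j$ and that the cone estimate holds uniformly. The first thing I would try is to derive both the compatibility and the estimate from the octahedral axiom together with the stabilization of $\sigma^{\geq0}f_j$.
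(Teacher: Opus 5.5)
The paper gives no proof of this statement: it quotes \cite[Theorem 3.5]{BCRPZ} and uses it as a black box. So there is no internal argument to compare yours against, and your proposal is a correct, self-contained proof.

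Your key lemma is the stabilization step. Extendability gives $\mathfrak{B}_{n+m}\subseteq\mathcal{T}^{\leq -m}$, so the cones of a Cauchy sequence eventually lie arbitrarily deep in the aisle. Hence $\{\sigma^{\geq k}X_\bullet\}$ is eventually constant, and $\{\sigma^{\leq k-1}X_\bullet\}$ is again Cauchy with the same eventual cones.

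This is the mechanism the paper later re-derives by hand in its proof of Theorem \ref{thmA}. There, the triangle $F_n\to E_n\to X_n\to\Sigma F_n$ is just the truncation triangle of $E_n$ for $(\mathcal{T}^{\leq-1},\mathcal{T}^{\geq0})$. The construction and uniqueness of $f_n,x_n$ is functoriality of truncation, and the vanishing of $Z''_n$ for large $n$ is your stabilization step at level $0$. The citation buys brevity; your route buys transparency and a reusable lemma that would noticeably shorten the paper's main proof.

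One step needs an extra line. In (t3), Lemma \ref{thm1} only produces distinguished triangles all of whose terms lie in $\mathfrak{G}(\mathcal{T})$. So you must prove, not just assert, that $A=\operatorname{colim}Y(\sigma^{\leq-1}X_j)$ lies in $\mathfrak{C}(\mathcal{T})$. This follows by two-out-of-three:
\begin{enumerate}
\item Filtered colimits are exact in $\text{Mod-}\mathcal{T}$, so $\Sigma^{-1}Y(\sigma^{\geq0}X_N)\to A\to F$ is exact.
\item Suppose $F(\mathfrak{B}_{i_0})=0$ and $T\in\mathfrak{B}_i$ with $i\geq\max(i_0,n)$, where $n$ is your extendability index. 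Then $F(T)=0$.
\item Also $\mathrm{Hom}_{\mathcal{T}}(\Sigma T,\sigma^{\geq0}X_N)=0$, because $T\in\mathfrak{B}_n\subseteq\mathcal{T}^{\leq0}$ gives $\Sigma T\in\mathcal{T}^{\leq-1}$.
\item Hence $A(T)=0$.
\end{enumerate}
This is the same argument the paper uses for $\operatorname{colim}Y(F_n)$ in the proof of Theorem \ref{thmA}.

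The point you flag as the main obstacle is not actually one. The maps $\sigma^{\leq-1}f_j$, $f_j$, $\sigma^{\geq0}f_j$ automatically form morphisms of triangles by functoriality of truncation. The extension of $f_j$ is unique since $\mathrm{Hom}_{\mathcal{T}}(\Sigma\sigma^{\leq-1}X_j,\sigma^{\geq0}X_{j+1})=0$. The required cone estimates are exactly what your stabilization step already provides.
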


\subsection{HRS-tilt of $t$-structures}
\noindent

Let $\mathcal{T}$ be a $K$-linear essentially small triangulated category with shift functor $\Sigma$. In this subsection, we recall the notion of HRS-tilt $t$-structures on triangulated categories.

\begin{df}$($\cite{D}$)$ Let $\mathcal{A}$ be a $K$-linear abelian category, $\mathcal{X}$ be a full subcategory of $\mathcal{A}$. We call $\mathcal{X}$ a \emph{torsion class} if, for each $a\in\mathcal{A}$, there exists an exact sequence $0\rightarrow x\rightarrow a\rightarrow y\rightarrow0$ with $x\in\mathcal{X}$ and $y\in\mathcal{X}^{\perp}$.

We denote by $\mathrm{tors}(\mathcal{A})$ the set of torsion classes of $\mathcal{A}$.
\end{df}

For a $t$-structure $(\mathcal{T}^{\leq0},\mathcal{T}^{\geq0})$ on $\mathcal{T}$ and a torsion class $\mathcal{X}$ of the heart $\mathcal{T}^{0}$, we define the following two full subcategories of $\mathcal{T}$:
\begin{center}
$\mu^{\mathrm{L}}_{\mathcal{X}}\mathcal{T}^{\leq0}:=\mathcal{T}^{\leq-1}*\mathcal{X},~\mu^{\mathrm{L}}_{\mathcal{X}}\mathcal{T}^{\geq0}:=\Sigma\mathcal{X}^{\perp_{\mathcal{T}^{0}}}*\mathcal{T}^{\geq0}$.
\end{center}
By \cite[Chapter 1, Proposition 2.1]{HRS}, we have the following lemma:

\begin{lem} $(1)$ The pair $(\mu^{\mathrm{L}}_{\mathcal{X}}\mathcal{T}^{\leq0},\mu^{\mathrm{L}}_{\mathcal{X}}\mathcal{T}^{\geq0})$ is also a  $t$-structure on $\mathcal{T}$, and its heart is $\Sigma\mathcal{X}^{\perp_{\mathcal{T}^{0}}}*\mathcal{X}$.

$(2)$ If $(\mathcal{T}^{\leq0},\mathcal{T}^{\geq0})$ is bounded, then the pair $(\mu^{\mathrm{L}}_{\mathcal{X}}\mathcal{T}^{\leq0},\mu^{\mathrm{L}}_{\mathcal{X}}\mathcal{T}^{\geq0})$ is also a bounded $t$-structure on $\mathcal{T}$.
\end{lem}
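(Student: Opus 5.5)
We prove part (1) by checking the axioms (t1)--(t3) of Definition \ref{dfn0} for the pair $\mathcal{U}:=\mu^{\mathrm{L}}_{\mathcal{X}}\mathcal{T}^{\leq0}$, $\mathcal{V}:=\mu^{\mathrm{L}}_{\mathcal{X}}\mathcal{T}^{\geq0}$, and then identify the heart. Part (2) follows from sandwich inclusions. Throughout, put $\mathcal{F}:=\mathcal{X}^{\perp_{\mathcal{T}^{0}}}$, so that $(\mathcal{X},\mathcal{F})$ is a torsion pair in the abelian category $\mathcal{T}^{0}$. We use the standard facts that $\mathcal{X}$ is closed under quotients and extensions in $\mathcal{T}^0$, and $\mathcal{F}$ under subobjects. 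Write $H^i$ for the cohomological functors of $(\mathcal{T}^{\leq0},\mathcal{T}^{\geq0})$.

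\textbf{Step (t1).} Since $0\in\mathcal{X}$, we have $\mathcal{T}^{\leq-1}\subseteq\mathcal{U}$. Also $\mathcal{U}\subseteq\mathcal{T}^{\leq 0}$, because $\mathcal{X}\subseteq\mathcal{T}^{\leq0}$ and aisles are extension closed. Hence
$$\Sigma\mathcal{U}\subseteq\mathcal{T}^{\leq-1}\subseteq\mathcal{U}.$$
Dually, $\mathcal{T}^{\geq0}\subseteq\mathcal{V}\subseteq\mathcal{T}^{\geq-1}$, which gives $\Sigma^{-1}\mathcal{V}\subseteq\mathcal{T}^{\geq0}\subseteq\mathcal{V}$.

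\textbf{Step (t2).} We need $\mathrm{Hom}(\Sigma\mathcal{U},\mathcal{V})=0$. Here $\Sigma\mathcal{U}=\mathcal{T}^{\leq-2}*\Sigma\mathcal{X}$ and $\mathcal{V}=\Sigma\mathcal{F}*\mathcal{T}^{\geq0}$. By the long exact Hom sequences of triangles, $\mathrm{Hom}(\mathcal{A}*\mathcal{B},\mathcal{C}*\mathcal{D})=0$ as soon as all four pairs among $\{\mathcal{A},\mathcal{B}\}\times\{\mathcal{C},\mathcal{D}\}$ have vanishing Hom. So it suffices to check four vanishings:
\begin{itemize}
\item $\mathrm{Hom}(\mathcal{T}^{\leq-2},\Sigma\mathcal{F})=0$, since $\Sigma\mathcal{F}\subseteq\mathcal{T}^{\geq-1}$;
\item $\mathrm{Hom}(\mathcal{T}^{\leq-2},\mathcal{T}^{\geq0})=0$;
\item $\mathrm{Hom}(\Sigma\mathcal{X},\Sigma\mathcal{F})=\mathrm{Hom}(\mathcal{X},\mathcal{F})=0$;
\item $\mathrm{Hom}(\Sigma\mathcal{X},\mathcal{T}^{\geq0})=0$, since $\Sigma\mathcal{X}\subseteq\mathcal{T}^{\leq-1}$.
\end{itemize}

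\textbf{Step (t3).} This is the main step. We show $\mathcal{T}=\mathcal{U}*\Sigma^{-1}\mathcal{V}$, where $\Sigma^{-1}\mathcal{V}=\mathcal{F}*\mathcal{T}^{\geq1}$; this is (t3) for the shifted pair. Fix $Z\in\mathcal{T}$.
\begin{itemize}
\item Take the truncation triangles $Z^{\leq0}\to Z\to Z^{\geq1}$ and $Z^{\leq-1}\to Z^{\leq0}\to H^0(Z)$.
\item Take the torsion sequence $0\to x\to H^0(Z)\to y\to0$ with $x\in\mathcal{X}$ and $y\in\mathcal{F}$.
\item Let $A$ be the cocone of the composite $Z^{\leq0}\to H^0(Z)\to y$.
\item Apply the octahedral axiom to $Z^{\leq0}\to H^0(Z)\to y$. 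This gives a triangle $Z^{\leq-1}\to A\to x$, so $A\in\mathcal{U}$.
\item Apply the octahedral axiom to the composite $A\to Z^{\leq0}\to Z$. This gives a triangle $y\to \mathrm{Cone}(A\to Z)\to Z^{\geq1}$, so $\mathrm{Cone}(A\to Z)\in\mathcal{F}*\mathcal{T}^{\geq1}$.
\end{itemize}
The delicate point is keeping track of which cones the two octahedra produce; I expect this bookkeeping to be the only real obstacle.

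\textbf{Heart.} The inclusion $\Sigma\mathcal{F}*\mathcal{X}\subseteq\mathcal{U}\cap\mathcal{V}$ is immediate from the definitions. Conversely, let $E\in\mathcal{U}\cap\mathcal{V}$. By Step (t1), $E\in\mathcal{T}^{\leq0}\cap\mathcal{T}^{\geq-1}$.
\begin{itemize}
\item From a triangle $U'\to E\to x$ with $U'\in\mathcal{T}^{\leq-1}$ and $x\in\mathcal{X}$, applying $H^0$ gives that $H^0(E)$ is a quotient of $x$. Hence $H^0(E)\in\mathcal{X}$.
\item From a triangle $\Sigma f\to E\to V'$ with $f\in\mathcal{F}$ and $V'\in\mathcal{T}^{\geq0}$, applying $H^{-1}$ gives that $H^{-1}(E)$ is a subobject of $f$. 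Hence $H^{-1}(E)\in\mathcal{F}$.
\end{itemize}
The truncation triangle $\Sigma H^{-1}(E)\to E\to H^0(E)$ then shows $E\in\Sigma\mathcal{F}*\mathcal{X}$.

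\textbf{Part (2).} Shifting the inclusions $\mathcal{T}^{\leq-1}\subseteq\mathcal{U}\subseteq\mathcal{T}^{\leq0}$ and $\mathcal{T}^{\geq0}\subseteq\mathcal{V}\subseteq\mathcal{T}^{\geq-1}$ gives $\bigcup_n\Sigma^{-n}\mathcal{U}=\mathcal{T}^{-}$ and $\bigcup_n\Sigma^{n}\mathcal{V}=\mathcal{T}^{+}$. Therefore, if $(\mathcal{T}^{\leq0},\mathcal{T}^{\geq0})$ is bounded, so is $(\mathcal{U},\mathcal{V})$.
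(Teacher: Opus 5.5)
Your proof is correct. The paper gives no argument of its own: it cites \cite[Chapter 1, Proposition 2.1]{HRS}. That result is formulated for the standard $t$-structure on $\mathrm{D}^{\mathrm{b}}(\mathcal{A})$ of an abelian category $\mathcal{A}$, with the tilted aisle and coaisle described by cohomology: $H^{>0}=0$ and $H^{0}\in\mathcal{X}$, resp.\ $H^{<-1}=0$ and $H^{-1}\in\mathcal{X}^{\perp}$.

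You instead verify the axioms directly, for an arbitrary $t$-structure on an arbitrary triangulated category, using the $*$-descriptions the paper actually uses:
\begin{itemize}
\item (t2) comes from Hom-vanishing on the four constituent pieces.
\item (t3) comes from two octahedra built from the truncation triangle and the torsion sequence of $H^{0}(Z)$. Your cone bookkeeping is right: the first octahedron yields $Z^{\leq-1}\to A\to x$, the second yields $y\to\mathrm{Cone}(A\to Z)\to Z^{\geq1}$.
\item The heart and boundedness come from the sandwich inclusions.
\end{itemize}
The citation buys brevity. Your argument proves the lemma in exactly the generality and with exactly the definitions used later in the paper, with no tacit translation between the cohomological and the $*$-descriptions. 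It also uses nothing about $\mathcal{X}$ beyond the paper's definition of a torsion class.

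Your inclusions $\mathcal{T}^{\geq0}\subseteq\mathcal{V}\subseteq\mathcal{T}^{\geq-1}$ are the correct ones. The sentence after the lemma in the paper asserts $\mathcal{T}^{\geq1}\subseteq\mu^{\mathrm{L}}_{\mathcal{X}}\mathcal{T}^{\geq0}\subseteq\mathcal{T}^{\geq0}$. That fails whenever $\mathcal{X}^{\perp_{\mathcal{T}^{0}}}\neq0$; for instance, $\mathcal{X}=0$ gives the shifted $t$-structure $(\mathcal{T}^{\leq-1},\mathcal{T}^{\geq-1})$.

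One wording slip in the heart computation. The natural maps are $H^{0}(E)\to x$ and $f\to H^{-1}(E)$. The long exact sequences make both isomorphisms, because $H^{0}(U')=H^{1}(U')=0$ and $H^{-2}(V')=H^{-1}(V')=0$. Neither vanishing alone gives the relation you state. For example, $H^{0}(U')=0$ only makes $H^{0}(E)$ a subobject of $x$, and $H^{-1}(V')=0$ only makes $H^{-1}(E)$ a quotient of $f$. Those are the wrong directions for the closure properties you quote. With the isomorphisms the conclusion stands, and those closure properties are not needed at all.
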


The $t$-structure $(\mu^{\mathrm{L}}_{\mathcal{X}}\mathcal{T}^{\leq0},\mu^{\mathrm{L}}_{\mathcal{X}}\mathcal{T}^{\geq0})$ is called \emph{HRS-tilt} of $(\mathcal{T}^{\leq0},\mathcal{T}^{\geq0})$ with respect to $\mathcal{X}$. It is easy to see that
\begin{center}
$\mathcal{T}^{\leq-1}\subseteq\mu^{\mathrm{L}}_{\mathcal{X}}\mathcal{T}^{\leq0}\subseteq\mathcal{T}^{\leq0}$ and $\mathcal{T}^{\geq1}\subseteq\mu^{\mathrm{L}}_{\mathcal{X}}\mathcal{T}^{\geq0}\subseteq\mathcal{T}^{\geq0}$.
\end{center}

\begin{lem}\label{lem2} Let $\mathfrak{B}$ be a good metric and $(\mathcal{T}^{\leq0},\mathcal{T}^{\geq0})$ a $t$-structure on $\mathcal{T}$. If $(\mathcal{T}^{\leq0},\mathcal{T}^{\geq0})$ is extendable with respect to $\mathfrak{B}$, then so is $(\mu^{\mathrm{L}}_{\mathcal{X}}\mathcal{T}^{\leq0},\mu^{\mathrm{L}}_{\mathcal{X}}\mathcal{T}^{\geq0})$ with respect to $\mathfrak{B}$.
\end{lem}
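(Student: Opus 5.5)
The claim is that $(\mu^{\mathrm{L}}_{\mathcal{X}}\mathcal{T}^{\leq0},\mu^{\mathrm{L}}_{\mathcal{X}}\mathcal{T}^{\geq0})$ is extendable. By the definition of extendability, I only have to produce a natural number $m$ with $\mathfrak{B}_{m}\subseteq\mu^{\mathrm{L}}_{\mathcal{X}}\mathcal{T}^{\leq0}$. The proof rests on the inclusion $\mathcal{T}^{\leq-1}\subseteq\mu^{\mathrm{L}}_{\mathcal{X}}\mathcal{T}^{\leq0}$ noted just before the lemma, together with the shift-stability of a good metric recorded in Remark \ref{rmk0}.

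First, use the extendability of $(\mathcal{T}^{\leq0},\mathcal{T}^{\geq0})$ to fix $n\in\mathbb{N}$ with $\mathfrak{B}_{n}\subseteq\mathcal{T}^{\leq0}$. Next, apply Remark \ref{rmk0} with $j=-1$. This gives $\mathfrak{B}_{n+1}\subseteq\Sigma^{-1}\mathfrak{B}_{n}$, and this also follows directly from the defining condition $\Sigma\mathfrak{B}_{n+1}\subseteq\mathfrak{B}_{n}$ of a good metric. Combining the two inclusions,
$$\mathfrak{B}_{n+1}\subseteq\Sigma^{-1}\mathfrak{B}_{n}\subseteq\Sigma^{-1}\mathcal{T}^{\leq0}=\mathcal{T}^{\leq1}.$$

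The shift in this step goes the wrong way, so I would redo it with $j=+1$. Remark \ref{rmk0} gives $\mathfrak{B}_{n+1}\subseteq\Sigma\mathfrak{B}_{n}$, which follows from $\Sigma^{-1}\mathfrak{B}_{n+1}\subseteq\mathfrak{B}_{n}$. Then
$$\mathfrak{B}_{n+1}\subseteq\Sigma\mathfrak{B}_{n}\subseteq\Sigma\mathcal{T}^{\leq0}=\mathcal{T}^{\leq-1}\subseteq\mathcal{T}^{\leq-1}*\mathcal{X}=\mu^{\mathrm{L}}_{\mathcal{X}}\mathcal{T}^{\leq0}.$$
The last inclusion holds because $0\in\mathcal{X}$: any $T\in\mathcal{T}^{\leq-1}$ sits in the triangle $T\to T\to 0\to\Sigma T$. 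So $m=n+1$ works, and the HRS-tilt is extendable with respect to $\mathfrak{B}$.

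The only point needing care is the convention $\mathcal{T}^{\leq n}=\Sigma^{-n}\mathcal{T}^{\leq0}$, which gives $\Sigma\mathcal{T}^{\leq0}=\mathcal{T}^{\leq-1}$. The argument must use the part of the good-metric axiom that moves $\mathfrak{B}$ into $\Sigma\mathfrak{B}_{n}$. Beyond that I expect no real obstacle: the lemma uses nothing about $\mathcal{X}$ except that it contains $0$, which every torsion class does.
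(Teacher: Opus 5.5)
Your proof is correct and matches the paper's: fix $n$ with $\mathfrak{B}_{n}\subseteq\mathcal{T}^{\leq0}$ and take $m=n+1$. This works because $\mathfrak{B}_{n+1}\subseteq\Sigma\mathfrak{B}_{n}\subseteq\mathcal{T}^{\leq-1}\subseteq\mathcal{T}^{\leq-1}*\mathcal{X}$; the paper leaves the first inclusion implicit, while you justify it and also note why $0\in\mathcal{X}$. In the final write-up, drop the abandoned $j=-1$ computation.
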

\begin{proof} By assumption, $(\mathcal{T}^{\leq0},\mathcal{T}^{\geq0})$ is extendable with respect to the good metric $\mathfrak{B}$, then there exists an $n\in\mathbb{N}$ such that $\mathfrak{B}_{n}\subseteq\mathcal{T}^{\leq0}$. Note that $\Sigma\mathfrak{B}_{n}\subseteq\Sigma\mathcal{T}^{\leq0}=\mathcal{T}^{\leq-1}\subseteq\mathcal{T}^{\leq-1}*\mathcal{X}$. So there exists an $m=n+1\in\mathbb{N}$ such that $\mathfrak{B}_{m}\subseteq\mathcal{T}^{\leq-1}*\mathcal{X}$. Thus $(\mu^{\mathrm{L}}_{\mathcal{X}}\mathcal{T}^{\leq0},\mu^{\mathrm{L}}_{\mathcal{X}}\mathcal{T}^{\geq0})$ is extendable respect to the good metric $\mathfrak{B}$.
\end{proof}

\bigskip
\section{\bf Main results}
In this section, we give the proof of the Theorem \ref{thmA}, and apply the result to the iteration HRS-tilt.

Throughout this section, let $\mathcal{T}$ be a $K$-linear essentially small triangulated category with shift functor $\Sigma$ and a good metric $\mathfrak{B}$.

\begin{lem}\label{lem0} Let $\mathcal{X}$ be a full subcategory of $\mathcal{T}$. Then
\begin{center}
$\mathfrak{G}(\mathcal{X}^{\perp_{\mathcal{T}}})\subseteq\mathfrak{G}(\mathcal{X})^{\perp_{\mathfrak{G}(\mathcal{T})}}$.
\end{center}
\end{lem}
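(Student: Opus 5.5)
We need to show: if $F\in\mathfrak{G}(\mathcal{X}^{\perp})$ and $G\in\mathfrak{G}(\mathcal{X})$, then $\mathrm{Hom}_{\mathfrak{G}(\mathcal{T})}(G,F)=0$, where morphisms are natural transformations in Mod-$\mathcal{T}$.

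We write $F\simeq\operatorname*{colim} Y(E_n)$ for a Cauchy sequence $\{E_\bullet\}$ with each $E_n\in\mathcal{X}^{\perp}$, and $G\simeq\operatorname*{colim} Y(X_n)$ for a Cauchy sequence $\{X_\bullet\}$ with $X_n\in\mathcal{X}$. The colimit is taken in Mod-$\mathcal{T}$, which is a presheaf category, so it is computed objectwise. Hence
\[
\mathrm{Hom}(G,F)\simeq\lim_n \mathrm{Hom}\bigl(Y(X_n),F\bigr)\simeq \lim_n F(X_n),
\]
using the universal property of the colimit and then the Yoneda lemma. It therefore suffices to show $F(X)=0$ for every $X\in\mathcal{X}$. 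Evaluating pointwise gives
\[
F(X)=\operatorname*{colim}_n \mathrm{Hom}_{\mathcal{T}}(X,E_n).
\]
Each term vanishes because $E_n\in\mathcal{X}^{\perp_{\mathcal{T}}}$ and $X\in\mathcal{X}$. A filtered colimit of zero abelian groups is zero, so $F(X)=0$. Consequently every $\mathrm{Hom}(Y(X_n),F)$ is zero, the inverse limit is zero, and $\mathrm{Hom}(G,F)=0$.

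This argument uses neither the $\mathfrak{C}(\mathcal{T})$-condition nor the Cauchy property, only that objects of $\mathfrak{L}$ are sequential colimits of representables. I also read the perp in $\mathfrak{G}(\mathcal{T})$ as computed with Hom in Mod-$\mathcal{T}$. This is legitimate because $\mathfrak{G}(\mathcal{T})$ is a full subcategory of Mod-$\mathcal{T}$, by Definition~\ref{dfn1} and Lemma~\ref{thm1}.

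The only point needing care is the identification $\mathrm{Hom}(\operatorname*{colim} Y(X_n),F)\simeq\lim F(X_n)$. It holds because colimits in Mod-$\mathcal{T}$ exist and are objectwise, so the functor $\mathrm{Hom}(-,F)$ turns them into limits. Everything else is routine.
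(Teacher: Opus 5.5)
Your proposal is correct and is essentially the paper's proof: both identify $\mathrm{Hom}(\operatorname{colim}_m Y(X_m),F)$ with $\varprojlim_m \operatorname{colim}_k \mathrm{Hom}_{\mathcal{T}}(X_m,E_k)$, and both observe that every term vanishes because $E_k\in\mathcal{X}^{\perp_{\mathcal{T}}}$. You simply spell out the Yoneda and objectwise-colimit steps that the paper compresses into a single isomorphism, and, as in the paper's argument, neither the Cauchy condition nor membership in $\mathfrak{C}(\mathcal{T})$ is needed.
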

\begin{proof} Let $E\in\mathfrak{G}(\mathcal{X}^{\perp_{\mathcal{T}}})$ and $F\in\mathfrak{G}(\mathcal{X})$. Then there exist Cauchy sequences $\{E_{\bullet}\}$ and $\{F_{\bullet}\}$ with respect to the good metric $\mathfrak{B}$ in $\mathcal{T}$ with $E_{k}\in\mathcal{X}^{\perp_{\mathcal{T}}}$ and $F_{m}\in\mathcal{X}$ for any $k,~m\in\mathbb{N}$ such that $E\simeq\displaystyle\operatorname*{colim}_{k\to}Y(E_{k})$ and $F\simeq\displaystyle\operatorname*{colim}_{m\to}Y(F_{m})$. Note that for any $k,~m\in\mathbb{N}$, $\mathrm{Hom}_{\mathcal{T}}(F_{m},E_{k})=0$. So
\begin{align*}
{\rm Hom}_{\mathcal{T}\text{-Mod}}(F, E)
& = {\rm Hom}_{\mathcal{T}\text{-Mod}}(\displaystyle\operatorname*{colim}_{m\to}Y(F_{m}),\displaystyle\operatorname*{colim}_{k\to}Y(E_{k})) \\
&\cong \varprojlim\nolimits_{m }\displaystyle\operatorname*{colim}_{k\to}{\rm Hom}_{\mathcal{T}}(F_{m}, E_{k})\\
&= 0.
\end{align*}Thus $E\in F^{\perp}$. This implies that $\mathfrak{G}(\mathcal{X}^{\perp_{\mathcal{T}}})\subseteq\mathfrak{G}(\mathcal{X})^{\perp_{\mathfrak{G}(\mathcal{T})}}$.
\end{proof}

We are particularly interested in the following proposition.

\begin{prop} Let $(\mathcal{T}^{\leq0},\mathcal{T}^{\geq0})$ be a $t$-structure on $\mathcal{T}$ and $\mathcal{X}$ a torsion class of the heart $\mathcal{T}^{0}$. Then
\begin{center}
$\mathfrak{G}(\mathcal{X}^{\perp_{\mathcal{T}^{0}}})=\mathfrak{G}(\mathcal{X})^{\perp_{Y(\mathcal{T}^{0})}}$.
\end{center}
\end{prop}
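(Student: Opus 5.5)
We prove the two inclusions separately. Throughout we assume, as in Theorem \ref{thmA}, that $(\mathcal{T}^{\leq0},\mathcal{T}^{\geq0})$ is extendable with respect to $\mathfrak{B}$. This is needed to invoke Lemma \ref{thm2}.

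The key simplification is that every subcategory in sight lies in $\mathcal{T}^{0}\subseteq\mathcal{T}^{\geq0}$. By Lemma \ref{thm2}(2), for any full subcategory $\mathcal{Z}\subseteq\mathcal{T}^{\geq0}$ the Yoneda functor restricts to an equivalence $Y|_{\mathcal{Z}}:\mathcal{Z}\to\mathfrak{G}(\mathcal{Z})$. Taking $\mathcal{Z}=\mathcal{X}$, $\mathcal{Z}=\mathcal{X}^{\perp_{\mathcal{T}^{0}}}$ and $\mathcal{Z}=\mathcal{T}^{0}$ gives
\[
\mathfrak{G}(\mathcal{X})=Y(\mathcal{X}),\qquad \mathfrak{G}(\mathcal{X}^{\perp_{\mathcal{T}^{0}}})=Y(\mathcal{X}^{\perp_{\mathcal{T}^{0}}}),\qquad \mathfrak{G}(\mathcal{T}^{0})=Y(\mathcal{T}^{0}),
\]
as subcategories of Mod-$\mathcal{T}$, up to isomorphism closure. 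We first check that the essential image of $Y|_{\mathcal{Z}}$ is exactly $\mathfrak{G}(\mathcal{Z})$. The inclusion $\mathfrak{G}(\mathcal{Z})\subseteq Y(\mathcal{Z})$ comes from essential surjectivity in Lemma \ref{thm2}(2). For the converse inclusion, note that $Y(Z)$ is the colimit of the constant sequence at $Z$. It also lies in $\mathfrak{C}(\mathcal{T})$, because extendability gives $\mathfrak{B}_{i}\subseteq\mathcal{T}^{\leq -1-|j|}$ for large $i$, hence $\Sigma^{j}\mathfrak{B}_{i}\subseteq\mathcal{T}^{\leq -1}$, which has no maps to $Z\in\mathcal{T}^{\geq0}$.

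With these identifications, the claim becomes
\[
Y(\mathcal{X}^{\perp_{\mathcal{T}^{0}}})=Y(\mathcal{X})^{\perp_{Y(\mathcal{T}^{0})}}.
\]
Because $Y$ is fully faithful, for $X\in\mathcal{X}$ and $T\in\mathcal{T}^{0}$ we have $\mathrm{Hom}(Y(X),Y(T))\cong\mathrm{Hom}_{\mathcal{T}}(X,T)$.

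For ``$\subseteq$'': an object of the left-hand side is isomorphic to $Y(T)$ with $T\in\mathcal{X}^{\perp_{\mathcal{T}^{0}}}$. Then $Y(T)\in Y(\mathcal{T}^{0})$, and $\mathrm{Hom}(Y(X),Y(T))=0$ for every $X\in\mathcal{X}$. This is also Lemma \ref{lem0} restricted to the heart.

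For ``$\supseteq$'': an object of the right-hand side is $G\cong Y(T)$ with $T\in\mathcal{T}^{0}$ and $\mathrm{Hom}(Y(X),Y(T))=0$ for all $X\in\mathcal{X}$. By full faithfulness, $\mathrm{Hom}_{\mathcal{T}}(X,T)=0$ for all $X\in\mathcal{X}$, so $T\in\mathcal{X}^{\perp_{\mathcal{T}^{0}}}$ and $G\in Y(\mathcal{X}^{\perp_{\mathcal{T}^{0}}})=\mathfrak{G}(\mathcal{X}^{\perp_{\mathcal{T}^{0}}})$.

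The main obstacle is the bookkeeping in the first step: showing $\mathfrak{G}(\mathcal{Z})$ coincides with the isomorphism closure of $Y(\mathcal{Z})$ for $\mathcal{Z}$ inside the heart, which relies on Lemma \ref{thm2}(2) and hence on extendability. The torsion-class hypothesis on $\mathcal{X}$ is not used for the equality itself. Its role is to make $\mathcal{X}^{\perp_{\mathcal{T}^{0}}}$ the torsion-free class, which is what the later application in Theorem \ref{thmA} needs.
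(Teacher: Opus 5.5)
Your proof is correct and follows essentially the paper's route. Both arguments use Lemma \ref{thm2}(2) to replace completions of subcategories of the heart by Yoneda images and then conclude by full faithfulness of $Y$. The paper phrases the Hom computations through Lemma \ref{lem0} and a $\varprojlim$ over Cauchy sequences, which your upfront identification $\mathfrak{G}(\mathcal{X})=Y(\mathcal{X})$ makes unnecessary. You are also right to state the extendability hypothesis explicitly: the paper's statement omits it, its proof needs it through Lemma \ref{thm2}(2), and without it the equality can fail (for the good metric $\mathfrak{B}_n=\mathcal{T}$ every completion is $\{0\}$, so the right-hand side is all of $Y(\mathcal{T}^{0})$).
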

\begin{proof} By Lemma \ref{lem0}, the inclusion relation $\mathfrak{G}(\mathcal{X}^{\perp_{\mathcal{T}^{0}}})\subseteq\mathfrak{G}(\mathcal{X})^{\perp_{Y(\mathcal{T}^{0})}}$ holds. Let $E\in\mathfrak{G}(\mathcal{X})^{\perp_{Y(\mathcal{T}^{0})}}$ and $F\in\mathfrak{G}(\mathcal{X})$. Note that $F\simeq\displaystyle\operatorname*{colim}_{m\to}Y(F_{m})$, where $\{F_{\bullet}\}$ is a Cauchy sequence in $\mathcal{T}$ with respect to the good metric $\mathfrak{B}$ with $F_{m}\in\mathcal{X}$ for $m\in\mathbb{N}$. As $E\in Y(\mathcal{T}^{0})$, there exists an object $T\in\mathcal{T}^{0}$ such that $E=Y(T)$. Then
\begin{align*}
\varprojlim\nolimits_{m }{\rm Hom}_{\mathcal{T}}(F_{m}, T)
& \cong {\rm Hom}_{\mathcal{T}\text{-Mod}}(\displaystyle\operatorname*{colim}_{m\to}Y(F_{m}),Y(T)) \\
&= {\rm Hom}_{\mathcal{T}\text{-Mod}}(F, E)\\
&= 0.
\end{align*}It implies that $T\in\mathcal{X}^{\perp}$. So $E\in Y(\mathcal{X}^{\perp})\simeq\mathfrak{G}(\mathcal{X}^{\perp})$ by Lemma \ref{thm2} (2), and hence $\mathfrak{G}(\mathcal{X})^{\perp_{Y(\mathcal{T}^{0})}}\subseteq\mathfrak{G}(\mathcal{X}^{\perp_{\mathcal{T}^{0}}})$.
\end{proof}

In the following, we show that the lifting of a torsion class is also torsion class in the completion of a triangulated category.

\begin{lem}\label{lem1}  Let $(\mathcal{T}^{\leq0},\mathcal{T}^{\geq0})$ be a $t$-structure on $\mathcal{T}$ and $\mathcal{X}$ a torsion class of the heart $\mathcal{T}^{0}$. If $(\mathcal{T}^{\leq0},\mathcal{T}^{\geq0})$ is extendable with respect to $\mathfrak{B}$, then $\mathfrak{G}(\mathcal{X})$ is a torsion class of the heart $Y(\mathcal{T}^{0})$.
\end{lem}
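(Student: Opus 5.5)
Since $\mathcal{X}\subseteq\mathcal{T}^{0}\subseteq\mathcal{T}^{\geq0}$, Lemma \ref{thm2}(2) shows that $Y|_{\mathcal{X}}:\mathcal{X}\to\mathfrak{G}(\mathcal{X})$ is an equivalence of additive categories. Hence $\mathfrak{G}(\mathcal{X})$ is, up to isomorphism, the essential image $Y(\mathcal{X})$. Likewise, by Lemma \ref{thm2}(1) the heart of $(\mathfrak{G}(\mathcal{T}^{\leq0}),\mathfrak{G}(\mathcal{T}^{\geq0}))$ is $Y(\mathcal{T}^{0})$, and $Y|_{\mathcal{T}^{0}}:\mathcal{T}^{0}\to Y(\mathcal{T}^{0})$ is an equivalence of categories because $Y$ is fully faithful. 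The plan is therefore to reduce the statement to the torsion property of $\mathcal{X}$ in $\mathcal{T}^{0}$ by transporting everything along this equivalence.

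First, I would check that $Y|_{\mathcal{T}^{0}}$ is an equivalence of abelian categories, i.e. that it is exact for the abelian structures on the two hearts. The abelian structure of a heart is determined by its triangles: a sequence $0\to x\to a\to y\to0$ is short exact in $\mathcal{T}^{0}$ exactly when it extends to a triangle $x\to a\to y\to\Sigma x$ in $\mathcal{T}$. By Lemma \ref{thm1}, applying $Y$ to such a triangle gives a triangle in $\mathfrak{G}(\mathcal{T})$, since it is the colimit of the constant Cauchy sequence of triangles. Its terms lie in $Y(\mathcal{T}^{0})$, so it is a short exact sequence in that heart. Alternatively, any additive equivalence between abelian categories is automatically exact, because kernels and cokernels are defined by universal properties. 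I would use this argument, which avoids checking triangles.

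Next, given $E\in Y(\mathcal{T}^{0})$, write $E\cong Y(a)$ with $a\in\mathcal{T}^{0}$. Take the torsion sequence $0\to x\to a\to y\to0$ in $\mathcal{T}^{0}$ with $x\in\mathcal{X}$ and $y\in\mathcal{X}^{\perp_{\mathcal{T}^{0}}}$. Its image $0\to Y(x)\to Y(a)\to Y(y)\to0$ is exact in $Y(\mathcal{T}^{0})$, and $Y(x)\in Y(\mathcal{X})\subseteq\mathfrak{G}(\mathcal{X})$ by the remark $Y(\mathcal{X})\subseteq\mathfrak{L}(\mathcal{X})$, which together with the equivalence of Lemma \ref{thm2}(2) places it in $\mathfrak{G}(\mathcal{X})$. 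Finally, $Y(y)$ lies in $Y(\mathcal{X}^{\perp_{\mathcal{T}^{0}}})\simeq\mathfrak{G}(\mathcal{X}^{\perp_{\mathcal{T}^{0}}})$, which equals $\mathfrak{G}(\mathcal{X})^{\perp_{Y(\mathcal{T}^{0})}}$ by the preceding Proposition. One can also see this directly: since every object of $\mathfrak{G}(\mathcal{X})$ is isomorphic to some $Y(x')$, full faithfulness gives $\mathrm{Hom}(Y(x'),Y(y))=\mathrm{Hom}_{\mathcal{T}}(x',y)=0$.

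The main obstacle is the identification of $\mathfrak{G}(\mathcal{X})$ with $Y(\mathcal{X})$ inside $\mathrm{Mod}\text{-}\mathcal{T}$. In particular, we need $\mathfrak{G}(\mathcal{X})$ to be closed under isomorphism and to contain $Y(\mathcal{X})$, i.e. that $Y(x)\in\mathfrak{C}(\mathcal{T})$. The latter follows from extendability: $\mathfrak{B}_{n}\subseteq\mathcal{T}^{\leq0}$ gives, via Remark \ref{rmk0}, $\mathrm{Hom}(\Sigma^{j}\mathfrak{B}_{i},x)=0$ for $i\geq n+|j|+1$, because $\Sigma^{j}\mathfrak{B}_{i}\subseteq\mathfrak{B}_{i-|j|}\subseteq\mathfrak{B}_{n+1}\subseteq\Sigma\mathfrak{B}_{n}\subseteq\mathcal{T}^{\leq-1}$ while $x\in\mathcal{T}^{\geq0}$. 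This is exactly the content packaged in Lemma \ref{thm2}(2), which I would cite.
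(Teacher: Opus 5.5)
Your proof is correct and follows the paper's route. You identify $\mathfrak{G}(\mathcal{X})$ with $Y(\mathcal{X})$ and the heart with $Y(\mathcal{T}^{0})$ via Lemma \ref{thm2}, then transport the torsion sequence along $Y$. The differences are minor: you justify exactness by noting that an equivalence of abelian categories is exact, which makes the paper's bare ``$Y$ is exact'' precise, and you obtain $Y(y)\in\mathfrak{G}(\mathcal{X})^{\perp}$ directly from full faithfulness of $Y$ rather than through the colimit computation of Lemma \ref{lem0}.
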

\begin{proof} It follows from Lemma \ref{thm2} (1) that $(\mathfrak{G}(\mathcal{T}^{\leq0}),\mathfrak{G}(\mathcal{T}^{\geq0}))$ is a $t$-structure on $\mathfrak{G}(\mathcal{T})$ with the heart $Y(\mathcal{T}^{0})$. Since $\mathcal{X}\subseteq\mathcal{T}^{0}\subseteq\mathcal{T}^{\geq0}$, we have $\mathfrak{G}(\mathcal{X})\simeq Y(\mathcal{X})\subseteq Y(\mathcal{T}^{0})$ by Lemma \ref{thm2} (2), and hence $\mathfrak{G}(\mathcal{X})$ is a full subcategory of $Y(\mathcal{T}^{0})$. For any $E\in Y(\mathcal{T}^{0})$, there exists $t\in\mathcal{T}^{0}$ such that $E=Y(t)$. Note that $\mathcal{X}$ is a torsion class of the heart $\mathcal{T}^{0}$, there is an exact sequence
\begin{center}
$0\rightarrow x\rightarrow t\rightarrow y\rightarrow0$
\end{center}
 with $x\in\mathcal{X}$ and $y\in\mathcal{X}^{\perp}$. As $Y$ is exact, it follows that the sequence $0\rightarrow Y(x)\rightarrow Y(t)\rightarrow Y(y)\rightarrow0$ is exact. Obviously, $Y(x)\in Y(\mathcal{X})\simeq\mathfrak{G}(\mathcal{X})$. Since $\mathcal{X}^{\perp}\subseteq\mathcal{T}^{0}\subseteq\mathcal{T}^{\geq0}$, we have $Y(y)\in Y(\mathcal{X}^{\perp}) \simeq\mathfrak{G}(\mathcal{X}^{\perp})\subseteq(\mathfrak{G}(\mathcal{X}))^{\perp}$ by Lemma \ref{thm2} (2) and Lemma \ref{lem0}. Thus $\mathfrak{G}(\mathcal{X})$ is a torsion class of the heart $Y(\mathcal{T}^{0})$.
\end{proof}

\begin{lem}\label{lem3}  Let $(\mathcal{T}^{\leq0},\mathcal{T}^{\geq0})$ be a $t$-structure on $\mathcal{T}$ and $\mathcal{X}$ a torsion class of the heart $\mathcal{T}^{0}$. If $(\mathcal{T}^{\leq0},\mathcal{T}^{\geq0})$ is extendable with respect $\mathfrak{B}$, then
\begin{center}
$(\mu^{\mathrm{L}}_{\mathfrak{G}(\mathcal{X})}\mathfrak{G}(\mathcal{T}^{\leq0}),\mu^{\mathrm{L}}_{\mathfrak{G}(\mathcal{X})}\mathfrak{G}(\mathcal{T}^{\geq0}))$ \end{center}is a $t$-structure on $\mathfrak{G}(\mathcal{T}),$
where
\begin{center}
$\mu^{\mathrm{L}}_{\mathfrak{G}(\mathcal{X})}\mathfrak{G}(\mathcal{T}^{\leq0}):=\mathfrak{G}(\mathcal{T}^{\leq-1})*\mathfrak{G}(\mathcal{X})$ and $\mu^{\mathrm{L}}_{\mathfrak{G}(\mathcal{X})}\mathfrak{G}(\mathcal{T}^{\geq0}):=\Sigma(\mathfrak{G}(\mathcal{X})^{\perp_{Y(\mathcal{T}^{0})}})*\mathfrak{G}(\mathcal{T}^{\geq0})$.
\end{center}
\end{lem}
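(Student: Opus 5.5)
The plan is to reduce the statement to the HRS lemma of Subsection 2.4 (\cite[Chapter 1, Proposition 2.1]{HRS}), applied inside the triangulated category $\mathfrak{G}(\mathcal{T})$ rather than $\mathcal{T}$. That lemma needs three inputs: a triangulated category, a $t$-structure on it, and a torsion class in the heart of that $t$-structure. Its conclusion is that $(\mathcal{U}^{\leq-1}*\mathcal{Z},\ \Sigma\mathcal{Z}^{\perp_{\mathcal{U}^0}}*\mathcal{U}^{\geq0})$ is again a $t$-structure. The lemma is purely formal: its proof uses only the axioms (t1)--(t3) and the abelian structure of the heart. In particular, the hypotheses that $\mathcal{T}$ is $K$-linear and essentially small play no role in it, so it applies verbatim to $\mathfrak{G}(\mathcal{T})$.

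First, Lemma \ref{thm1} makes $\mathfrak{G}(\mathcal{T})$ a triangulated category with suspension $\Sigma$. Since $(\mathcal{T}^{\leq0},\mathcal{T}^{\geq0})$ is extendable, Lemma \ref{thm2}(1) shows that $(\mathfrak{G}(\mathcal{T}^{\leq0}),\mathfrak{G}(\mathcal{T}^{\geq0}))$ is a $t$-structure on $\mathfrak{G}(\mathcal{T})$ with heart $Y(\mathcal{T}^{0})$.

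Second, I would identify the shifted aisle with the object appearing in the statement. Using $\mathfrak{G}(\Sigma^{j}\mathcal{X})=\Sigma^{j}\mathfrak{G}(\mathcal{X})$ from \cite[Lemma 2.14]{BCRPZ}, we get
$$\mathfrak{G}(\mathcal{T})^{\leq-1}=\Sigma\,\mathfrak{G}(\mathcal{T}^{\leq0})=\mathfrak{G}(\Sigma\mathcal{T}^{\leq0})=\mathfrak{G}(\mathcal{T}^{\leq-1}).$$
Hence $\mathfrak{G}(\mathcal{T}^{\leq-1})*\mathfrak{G}(\mathcal{X})$ is exactly the aisle $\mathcal{U}^{\leq-1}*\mathcal{Z}$ of the HRS construction, with $\mathcal{U}^{\leq0}=\mathfrak{G}(\mathcal{T}^{\leq0})$ and $\mathcal{Z}=\mathfrak{G}(\mathcal{X})$. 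Likewise, the proposed coaisle is $\Sigma(\mathcal{Z}^{\perp_{\mathcal{U}^{0}}})*\mathcal{U}^{\geq0}$, where $\mathcal{U}^{0}=Y(\mathcal{T}^{0})$.

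Third, Lemma \ref{lem1} shows that $\mathfrak{G}(\mathcal{X})$ is a torsion class of the heart $Y(\mathcal{T}^{0})$. Applying the HRS lemma to this data then yields the claim directly.

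The main obstacle is making sure the torsion-class hypothesis holds with respect to the abelian structure of the heart $Y(\mathcal{T}^{0})$ inside $\mathfrak{G}(\mathcal{T})$. Concretely, the short exact sequence $0\to Y(x)\to Y(t)\to Y(y)\to 0$ must correspond to a triangle $Y(x)\to Y(t)\to Y(y)\to\Sigma Y(x)$ in $\mathfrak{G}(\mathcal{T})$. I would get this by taking the triangle $x\to t\to y\to\Sigma x$ in $\mathcal{T}$ and viewing it as a constant Cauchy sequence of triangles; Lemma \ref{thm1} then makes its image under $Y$ a distinguished triangle. I would also check that $\perp$ in the statement is taken inside $Y(\mathcal{T}^0)$, which matches the HRS definition of the torsion-free class. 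It is also worth noting that $\mathfrak{G}(\mathcal{X})$ is closed under isomorphism, since it is defined via $\simeq$ colimits, so it is a genuine replete subcategory as the HRS lemma requires.
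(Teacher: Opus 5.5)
Your proposal is correct and is essentially the paper's proof. The paper likewise just applies the HRS lemma inside $\mathfrak{G}(\mathcal{T})$ to the $t$-structure $(\mathfrak{G}(\mathcal{T}^{\leq0}),\mathfrak{G}(\mathcal{T}^{\geq0}))$ of Lemma \ref{thm2}(1) and the torsion class $\mathfrak{G}(\mathcal{X})\subseteq Y(\mathcal{T}^{0})$ of Lemma \ref{lem1}. Your extra checks make explicit details the paper leaves tacit: that $\Sigma\mathfrak{G}(\mathcal{T}^{\leq0})=\mathfrak{G}(\mathcal{T}^{\leq-1})$, and that short exact sequences in $Y(\mathcal{T}^{0})$ come from constant Cauchy sequences of triangles.
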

\begin{proof} Since $(\mathfrak{G}(\mathcal{T}^{\leq0}),\mathfrak{G}(\mathcal{T}^{\geq0}))$ is a $t$-structure on $\mathfrak{G}(\mathcal{T})$ by Lemma \ref{thm2} (1) and $\mathfrak{G}(\mathcal{X})$ is a torsion class of the heart $Y(\mathcal{T}^{0})$ by Lemma \ref{lem1}, we have $(\mu^{\mathrm{L}}_{\mathfrak{G}(\mathcal{X})}\mathfrak{G}(\mathcal{T}^{\geq0}),\mu^{\mathrm{L}}_{\mathfrak{G}(\mathcal{X})}\mathfrak{G}(\mathcal{T}^{\geq0}))$ is a $t$-structure on $\mathfrak{G}(\mathcal{T})$, where
$\mu^{\mathrm{L}}_{\mathfrak{G}(\mathcal{X})}\mathfrak{G}(\mathcal{T}^{\leq0})=\mathfrak{G}(\mathcal{T}^{\leq-1})*\mathfrak{G}(\mathcal{X})$ and $\mu^{\mathrm{L}}_{\mathfrak{G}(\mathcal{X})}\mathfrak{G}(\mathcal{T}^{\geq0})=\Sigma(\mathfrak{G}(\mathcal{X})^{\perp_{Y(\mathcal{T}^{0})}})*\mathfrak{G}(\mathcal{T}^{\geq0})$.
\end{proof}

Now we are the position to give the proof of Theorem \ref{thmA}.

\begin{proof}Let $E\in\mathfrak{G}(\mu^{\mathrm{L}}_{\mathcal{X}}\mathcal{T}^{\leq0})$. Then $E\simeq\displaystyle\operatorname*{colim}_{n\to}Y(E_{n})$, where $\{E_{\bullet},e_{\bullet}\}$ is a Cauchy sequence in $\mathcal{T}$ with respect to $\mathfrak{B}$ with $E_{n}\in\mathcal{T}^{\leq-1}*\mathcal{X}$ for $n\in\mathbb{N}$. One has the following diagram
\begin{equation}\begin{split}\label{diag*}
\xymatrix@C=25pt@R=20pt{
F_{n}\ar[r]^{a} &E_{n}\ar[d]^{e_{n}}\ar[r]^{b}&X_{n}\ar[r]^{c} &\Sigma F_{n}&\\
F_{n+1}\ar[r]^{a'}& E_{n+1}\ar[r]^{b'}&X_{n+1}\ar[r]^{c'}&\Sigma F_{n+1}&}
\end{split}\end{equation}
with $X_{n}\in\mathcal{X}~\mathrm{and}~F_{n}\in\mathcal{T}^{\leq-1} ~\mathrm{for }~ n\in\mathbb{N}$. Applying functor $\mathrm{Hom}_{\mathcal{T}}(F_{n},-)$ to the second row of the diagram (3.1), we obtain a long exact sequence
\begin{center}
$\cdots\rightarrow\mathrm{Hom}_{\mathcal{T}}(F_{n},\Sigma^{-1} X_{n+1})\rightarrow\mathrm{Hom}_{\mathcal{T}}(F_{n},F_{n+1})\rightarrow\mathrm{Hom}_{\mathcal{T}}(F_{n},E_{n+1})\rightarrow\mathrm{Hom}_{\mathcal{T}}(F_{n},X_{n+1})\rightarrow\cdots$.
\end{center}
Since $F_{n}\in\mathcal{T}^{\leq-1}$, $X_{n+1}\in\mathcal{X}\subseteq\mathcal{T}^{\geq0}$ and $\mathrm{Hom}_{\mathcal{T}}(\mathcal{T}^{\leq-1},\mathcal{T}^{\geq0})=0$, we have $\mathrm{Hom}_{\mathcal{T}}(F_{n},X_{n+1})=0$. Thus there exists a morphism $f_{n}:F_{n}\rightarrow F_{n+1}$ such that $e_{n}a=a'f_{n}$. Assume that there exists a morphism $h:F_{n}\rightarrow F_{n+1}$ such that $e_{n}a=a'h$. Then $a'f_{n}=a'h$. Hence $a'(f_{n}-h)=0$. So $f_{n}-h$ is from certain morphism $F_{n}\rightarrow \Sigma^{-1}X_{n+1}$. Note that $\Sigma F_{n}\in\Sigma\mathcal{T}^{\leq-1}=\mathcal{T}^{\leq-2}\subseteq\mathcal{T}^{\leq-1}$ and  $X_{n+1}\in\mathcal{X}\subseteq\mathcal{T}^{\geq0}$. Then
\begin{center}
$\mathrm{Hom}_{\mathcal{T}}(F_{n},\Sigma^{-1}X_{n+1})\cong\mathrm{Hom}_{\mathcal{T}}(\Sigma F_{n},X_{n+1})=0.$
\end{center}
So $f_{n}=h$. Similarly, there exists a unique morphism $x_{n}:X_{n}\rightarrow X_{n+1}$ such that $b'e_{n}=x_{n}b$. Therefore we obtain the following commutative diagram
\begin{center} $\xymatrix@C=25pt@R=20pt{
F_{n}\ar[d]^{f_{n}}\ar[r]^a &E_{n}\ar[d]^{e_{n}}\ar[r]^b&X_{n}\ar[d]^{x_{n}}\ar[r]^c &\Sigma F_{n}\ar[d]^{\Sigma f_{n}}&\\
F_{n+1}\ar[r]^{a'}& E_{n+1}\ar[r]^{b'}&X_{n+1}\ar[r]^{c'}&\Sigma F_{n+1}.&}$
\end{center}Note that the Yoneda functor $Y(-)$ and the functor $\displaystyle\operatorname*{colim}_{n\to}(-)$ are exact. There is a long exact sequence:
\begin{equation}\begin{split}\label{diag*}
\displaystyle\operatorname*{colim}_{n\to}Y(F_{n})\rightarrow\displaystyle\operatorname*{colim}_{n\to}Y(E_{n})\rightarrow
\displaystyle\operatorname*{colim}_{n\to}Y(X_{n})\rightarrow\Sigma(\displaystyle\operatorname*{colim}_{n\to}Y(F_{n})).
\end{split}\end{equation}
in $\mathcal{T}\text{-Mod}$. Next, we prove that $\{F_{\bullet},f_{\bullet}\}$ and $\{X_{\bullet},x_{\bullet}\}$ are Cauchy sequences in $\mathcal{T}$.

Define $Z_{n}=\mathrm{Cone}(e_{n})$, $Z_{n}^{'}=\mathrm{Cone}(f_{n})$ and $Z_{n}^{''}=\mathrm{Cone}(x_{n})$. Then we obtain triangles $E_{n}\rightarrow E_{n+1}\rightarrow Z_{n}\rightarrow \Sigma E_{n}$, $X_{n}\rightarrow X_{n+1}\rightarrow Z_{n}^{''}\rightarrow \Sigma X_{n}$ and $F_{n}\rightarrow F_{n+1}\rightarrow Z_{n}^{'}\rightarrow \Sigma F_{n}$ in $\mathcal{T}$. By the $3\times3$ lemma of triangles, there exist dotted morphisms making the following diagram commute:
\begin{center} $\xymatrix@C=25pt@R=20pt{
\Sigma^{-1}X_{n}\ar[d]^{\Sigma^{-1}c}\ar[r]^{\Sigma^{-1}x_{n}} &\Sigma^{-1}X_{n+1}\ar[d]^{\Sigma^{-1}c'}\ar[r]^{}&\Sigma^{-1}Z_{n}^{''}\ar@{-->}[d]^{}&\\
F_{n}\ar[d]^{a}\ar[r]^{f_{n}} &F_{n+1}\ar[d]^{a'}\ar[r]^{}&Z_{n}^{'}\ar@{-->}[d]^{g'}\ar[r]^{} &\Sigma F_{n} \ar[d]^{\Sigma a}&\\
E_{n}\ar[d]^{b}\ar[r]^{e_{n}} &E_{n+1}\ar[d]^{b'}\ar[r]^{} &Z_{n}\ar@{-->}[d]^{g}\ar[r]^{} &\Sigma E_{n}&\\
X_{n}\ar[r]^{x_{n}}& X_{n+1}\ar[r]^{}&Z_{n}^{''}.&}$
\end{center}
Since $F_{n+1}\in\mathcal{T}^{\leq-1},~ \Sigma F_{n}\in\Sigma\mathcal{T}^{\leq-1}=\mathcal{T}^{\leq-2}\subseteq\mathcal{T}^{\leq-1}$ and $\mathcal{T}^{\leq-1}$ is closed under extensions, we have $Z_{n}^{'}\in\mathcal{T}^{\leq-1}$. Similarly, $Z_{n}^{''}\in\mathcal{T}^{\geq-1}$ as $\mathcal{T}^{\geq-1}$ is closed under extensions. By assumption, $(\mathcal{T}^{\leq0},\mathcal{T}^{\geq0})$ is extendable with respect to the good metric $\mathfrak{B}$, one has a natural number $k$ such that $\mathfrak{B}_{k}\subseteq\mathcal{T}^{\leq0}$. Hence $\mathfrak{B}_{k+1}\subseteq\mathcal{T}^{\leq-1}*\mathcal{X}$. Since $\{E_{\bullet},e_{\bullet}\}$ is a Cauchy sequence, for each $l\in\mathbb{N}$, there exists an $N_{l}\in\mathbb{N}$ such that $Z_{n}\in\mathfrak{B}_{l}$ for all $n\geq N_{l}$. Consider $l\geq k+2$ and $n\geq N_{l}$. Then $Z_{n}\in\mathfrak{B}_{l}\subseteq\mathfrak{B}_{k+2}\subseteq\Sigma^{2}\mathfrak{B}_{k}\subseteq\Sigma^{2}\mathcal{T}^{\leq0}=\mathcal{T}^{\leq-2}$, where the middle inclusion relation is from Remark \ref{rmk0}. As $Z_{n}^{''}\in\mathcal{T}^{\geq-1}$, it follows that $\mathrm{Hom}_{\mathcal{T}}(Z_{n},Z_{n}^{''})=0$, and so $g=0$. Thus $Z_{n}^{'}\cong \Sigma^{-1}Z_{n}^{''}\oplus Z_{n}$. Note that $\Sigma^{-1}Z_{n}^{''}\in\Sigma^{-1}\mathcal{T}^{\geq-1}=\mathcal{T}^{\geq0}$, $Z_{n}^{'}\in\mathcal{T}^{\leq-1}$ and $\mathrm{Hom}_{\mathcal{T}}(\mathcal{T}^{\leq-1},\mathcal{T}^{\geq0})=0$. Then
\begin{center}
$\mathrm{Hom}_{\mathcal{T}}(Z_{n}^{'},\Sigma^{-1}Z_{n}^{''})\cong\mathrm{Hom}_{\mathcal{T}}(\Sigma^{-1}Z_{n}^{''}\oplus Z_{n},\Sigma^{-1}Z_{n}^{''})=0$.
\end{center}It yields that $Z_{n}^{''}=0$, and so $x_{n}$ and $g'$ are isomorphisms. Since $\{E_{\bullet},e_{\bullet}\}$ is a Cauchy sequence and $g'$ is an isomorphism, we have $\{F_{\bullet},f_{\bullet}\}$ is a Cauchy sequence in $\mathcal{T}$. The fact that $x_{n}$ is an isomorphism implies that $\{X_{\bullet},x_{\bullet}\}$ is a Cauchy sequence in $\mathcal{T}$. Thus
\begin{center}
$\displaystyle\operatorname*{colim}_{n\to}Y(F_{n})\in\mathfrak{L}(\mathcal{T}^{\leq-1})$ and $\displaystyle\operatorname*{colim}_{n\to}Y(X_{n})\in\mathfrak{L}(\mathcal{X})$.
\end{center}Put $p=N_{k+2}$. Since $Z_{n}^{''}=0$ for each $n\geq p$, the sequence $\{X_{\bullet},x_{\bullet}\}$ is stable and
\begin{center}
$\displaystyle\operatorname*{colim}_{n\to}Y(X_{n})\simeq Y(X_{p})\in Y(\mathcal{X})$.
\end{center}

Note that $\mathfrak{B}_{k+2}\subseteq\Sigma^{2}\mathfrak{B}_{k}\subseteq\mathcal{T}^{\leq-2}\subseteq\mathcal{T}^{\leq-1}$ and $(\mathcal{T}^{\leq-1})^{\perp}=\mathcal{T}^{\geq0}$. Then $\mathcal{T}^{\geq0}\subseteq(\mathfrak{B}_{k+2})^{\perp}$. As $Y(\mathfrak{B}_{k+2}^{\perp})\in\mathfrak{C}(\mathcal{T})$ by the definition of $\mathfrak{C}(\mathcal{T})$, it yields that $Y(\mathcal{X})\subseteq Y(\mathcal{T}^{\geq0})\subseteq\mathfrak{C}(\mathcal{T})$, and so $\displaystyle\operatorname*{colim}_{n\to}Y(X_{n})\in Y(\mathcal{X})\in \mathfrak{C}(\mathcal{T})$. Thus $\displaystyle\operatorname*{colim}_{n\to}Y(X_{n})\in \mathfrak{G}(\mathcal{X})$. Let $T\in\mathfrak{B}_{k+2}$. Applying $\mathrm{Hom}(Y(T),-)$ to the diagram (3.2), we obtain the following commutative diagram of exact sequences:\begin{center} $\xymatrix@C=5pt@R=15pt{
(Y(T),\Sigma^{-1}(\displaystyle\operatorname*{colim}_{n\to}Y(X_{n})))\ar[d]^{\cong}\ar[r]^{}&(Y(T),\displaystyle\operatorname*{colim}_{n\to}Y(F_{n}))\ar[d]^{\cong}\ar[r]^{} &(Y(T),\displaystyle\operatorname*{colim}_{n\to}Y(E_{n}))\ar[d]^{\cong}\ar[r]^{}&(Y(T),\displaystyle\operatorname*{colim}_{n\to}Y(X_{n}))\ar[d]^{\cong} &\\
(T,\Sigma^{-1}X_{p})\ar[r]^{}& \displaystyle\operatorname*{colim}_{n\to}(T,F_{n})\ar[r]^{}&\displaystyle\operatorname*{colim}_{n\to}(T,E_{n})\ar[r]^{}&(T,X_{p}).&}$
\end{center}
Since $\mathcal{T}^{\geq0}\subseteq(\mathfrak{B}_{k+2})^{\perp}$, $X_{p}\in\mathcal{X}\subseteq\mathcal{T}^{\geq0}$ and $\Sigma^{-1}X_{p}\in\Sigma^{-1}\mathcal{T}^{\geq0}\subseteq\mathcal{T}^{\geq0}$, we have  $\mathrm{Hom}_{\mathcal{T}}(T,\Sigma^{-1}X_{p})=0=\mathrm{Hom}_{\mathcal{T}}(T,X_{p})$. So $\displaystyle\operatorname*{colim}_{n\to}\mathrm{Hom}_{\mathcal{T}}(T,F_{n})\rightarrow\displaystyle\operatorname*{colim}_{n\to}\mathrm{Hom}_{\mathcal{T}}(T,E_{n})$ is an isomorphism. Hence $\displaystyle\operatorname*{colim}_{n\to}Y(E_{n})\in\mathfrak{C}(\mathcal{T})$ if and only if $\displaystyle\operatorname*{colim}_{n\to}Y(F_{n})\in\mathfrak{C}(\mathcal{T})$. It yields that $\displaystyle\operatorname*{colim}_{n\to}Y(F_{n})\in \mathfrak{C}(\mathcal{T})$. So $\displaystyle\operatorname*{colim}_{n\to}Y(F_{n})\in \mathfrak{G}(\mathcal{T}^{\leq-1})$. We finish the proof of the inclusion relation $\mathfrak{G}(\mu^{\mathrm{L}}_{\mathcal{X}}\mathcal{T}^{\leq0})\subseteq\mu^{\mathrm{L}}_{\mathfrak{G}(\mathcal{X})}\mathfrak{G}(\mathcal{T}^{\leq0})$.

For the reverse containment, since $\mathcal{T}^{\leq-1}\subseteq\mathcal{T}^{\leq-1}\ast\mathcal{X}$ and $\mathcal{X}\subseteq\mathcal{T}^{\leq-1}\ast\mathcal{X}$, we have
\begin{center}
$\mathfrak{G}(\mathcal{T}^{\leq-1})\subseteq\mathfrak{G}(\mathcal{T}^{\leq-1}\ast\mathcal{X})$ and $\mathfrak{G}(\mathcal{X})\subseteq\mathfrak{G}(\mathcal{T}^{\leq-1}\ast\mathcal{X})$.
\end{center}By Lemma \ref{lem2}, $(\mu^{\mathrm{L}}_{\mathcal{X}}\mathcal{T}^{\leq0},\mu^{\mathrm{L}}_{\mathcal{X}}\mathcal{T}^{\geq0})$ is an extendable $t$-structure with respect to the good metric $\mathfrak{B}$. It follows from Lemma \ref{thm2} (1) that $(\mathfrak{G}(\mu^{\mathrm{L}}_{\mathcal{X}}\mathcal{T}^{\leq0}),\mathfrak{G}(\mu^{\mathrm{L}}_{\mathcal{X}}\mathcal{T}^{\geq0}))$ is a $t$-structure on $\mathfrak{G}(\mathcal{T})$. Then $\mathfrak{G}(\mu^{\mathrm{L}}_{\mathcal{X}}\mathcal{T}^{\leq0})=\mathfrak{G}(\mathcal{T}^{\leq-1}\ast\mathcal{X})$ is closed under extensions as aisle of $(\mathfrak{G}(\mu^{\mathrm{L}}_{\mathcal{X}}\mathcal{T}^{\leq0}),\mathfrak{G}(\mu^{\mathrm{L}}_{\mathcal{X}}\mathcal{T}^{\geq0}))$. So $\mathfrak{G}(\mathcal{T}^{\leq-1})\ast\mathfrak{G}(\mathcal{X})\subseteq\mathfrak{G}(\mathcal{T}^{\leq-1}\ast\mathcal{X})$.

Since $(\mu^{\mathrm{L}}_{\mathcal{X}}\mathcal{T}^{\leq0},\mu^{\mathrm{L}}_{\mathcal{X}}\mathcal{T}^{\geq0})$ is an extendable $t$-structure with respect to the good metric $\mathfrak{B}$ by Lemma \ref{lem2}, we have $(\mathfrak{G}(\mu^{\mathrm{L}}_{\mathcal{X}}\mathcal{T}^{\leq0}),\mathfrak{G}(\mu^{\mathrm{L}}_{\mathcal{X}}\mathcal{T}^{\geq0}))$ is a $t$-structure on $\mathfrak{G}(\mathcal{T})$. It follows from Lemma \ref{lem3} that $(\mu^{\mathrm{L}}_{\mathfrak{G}(\mathcal{X})}\mathfrak{G}(\mathcal{T}^{\leq0}),\mu^{\mathrm{L}}_{\mathfrak{G}(\mathcal{X})}\mathfrak{G}(\mathcal{T}^{\geq0}))$ is a $t$-structure on $\mathfrak{G}(\mathcal{T})$. Thus \begin{align*}
\mathfrak{G}(\mu^{\mathrm{L}}_{\mathcal{X}}\mathcal{T}^{\geq0})
& = (\mathfrak{G}(\mu^{\mathrm{L}}_{\mathcal{X}}\mathcal{T}^{\leq-1}))^{\perp} \\
& = (\mu^{\mathrm{L}}_{\mathfrak{G}(\mathcal{X})}\mathfrak{G}(\mathcal{T}^{\leq-1}))^{\perp}\\
& = \mu^{\mathrm{L}}_{\mathfrak{G}(\mathcal{X})}\mathfrak{G}(\mathcal{T}^{\geq0}).
\end{align*}
\end{proof}

In the subsequent content of this paper, for a $t$-structure $(\mathcal{T}^{\leq0},\mathcal{T}^{\geq0})$ on $\mathcal{T}$ and a torsion class $\mathcal{X}$ of the heart $\mathcal{T}^{0}$, we use $T_{\mathrm{HRS}}^{\mathcal{X}}$ to represent the HRS-tilt process of $(\mathcal{T}^{\leq0},\mathcal{T}^{\geq0})$.

\begin{cor}\label{cor0}  Let $(\mathcal{T}^{\leq0},\mathcal{T}^{\geq0})$ be a $t$-structure on $\mathcal{T}$ and $\mathcal{X}$ a torsion class of the heart $\mathcal{T}^{0}$. If $(\mathcal{T}^{\leq0},\mathcal{T}^{\geq0})$ is extendable with respect to $\mathfrak{B}$, then we have the following commutative diagram of abelian categories:
\begin{center} $\xymatrix@C=35pt@R=20pt{
\mathcal{T}^{0}\ar[d]^{\simeq}\ar[r]^{T_{\mathrm{HRS}}^{\mathcal{X}}} &\Sigma\mathcal{X}^{\perp_{\mathcal{T}^{0}}}*\mathcal{X}\ar[d]^{\simeq}&\\
\mathfrak{G}(\mathcal{T}^{0})\ar[r]^{T_{\mathrm{HRS}}^{\mathfrak{G}(\mathcal{X})}}& \mathfrak{G}(\Sigma\mathcal{X}^{\perp_{\mathcal{T}^{0}}}*\mathcal{X}).&}$
\end{center}
\end{cor}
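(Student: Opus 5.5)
The vertical arrows should come from Lemma \ref{thm2}, and the commutativity of the square from Theorem \ref{thmA}; the content of the corollary is that both rows are hearts of $t$-structures which correspond under completion.

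First I would identify the left vertical arrow. By Lemma \ref{thm2}(1), the heart of $(\mathfrak{G}(\mathcal{T}^{\leq0}),\mathfrak{G}(\mathcal{T}^{\geq0}))$ is $Y(\mathcal{T}^{0})$. Since $\mathcal{T}^{0}\subseteq\mathcal{T}^{\geq0}$, Lemma \ref{thm2}(2) shows that $Y|_{\mathcal{T}^{0}}:\mathcal{T}^{0}\to\mathfrak{G}(\mathcal{T}^{0})$ is an equivalence, so $\mathfrak{G}(\mathcal{T}^{0})=Y(\mathcal{T}^{0})$ up to isomorphism of objects. Because this is the heart of a $t$-structure on the triangulated category $\mathfrak{G}(\mathcal{T})$, it is abelian. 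The functor $Y$ sends short exact sequences of the heart (triangles with terms in $\mathcal{T}^{0}$) to triangles of $\mathfrak{G}(\mathcal{T})$, since constant sequences are Cauchy. Hence it is an equivalence of abelian categories.

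For the right vertical arrow I would use Lemma \ref{lem2}: the HRS-tilt $(\mu^{\mathrm{L}}_{\mathcal{X}}\mathcal{T}^{\leq0},\mu^{\mathrm{L}}_{\mathcal{X}}\mathcal{T}^{\geq0})$ is again extendable. Applying Lemma \ref{thm2}(1) and (2) to this $t$-structure, its heart $\Sigma\mathcal{X}^{\perp_{\mathcal{T}^{0}}}*\mathcal{X}$ lies in the coaisle $\mu^{\mathrm{L}}_{\mathcal{X}}\mathcal{T}^{\geq0}$. Therefore $Y$ restricts to an equivalence onto $\mathfrak{G}(\Sigma\mathcal{X}^{\perp_{\mathcal{T}^{0}}}*\mathcal{X})$, and this is precisely the heart of $(\mathfrak{G}(\mu^{\mathrm{L}}_{\mathcal{X}}\mathcal{T}^{\leq0}),\mathfrak{G}(\mu^{\mathrm{L}}_{\mathcal{X}}\mathcal{T}^{\geq0}))$. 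Exactness follows by the same argument as for the left arrow.

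The bottom arrow is the HRS-tilt of the heart $Y(\mathcal{T}^{0})$ at the torsion class $\mathfrak{G}(\mathcal{X})$ (Lemma \ref{lem1}); Lemma \ref{lem3} guarantees that this tilt is a $t$-structure. Its heart is $\mu^{\mathrm{L}}_{\mathfrak{G}(\mathcal{X})}\mathfrak{G}(\mathcal{T}^{\leq0})\cap\mu^{\mathrm{L}}_{\mathfrak{G}(\mathcal{X})}\mathfrak{G}(\mathcal{T}^{\geq0})$. By Theorem \ref{thmA} this equals $\mathfrak{G}(\mu^{\mathrm{L}}_{\mathcal{X}}\mathcal{T}^{\leq0})\cap\mathfrak{G}(\mu^{\mathrm{L}}_{\mathcal{X}}\mathcal{T}^{\geq0})$, the heart of the lifted tilted $t$-structure, which was identified in the previous paragraph with $\mathfrak{G}(\Sigma\mathcal{X}^{\perp_{\mathcal{T}^{0}}}*\mathcal{X})$. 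This is the target of the bottom arrow.

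To check commutativity I would compare the two images of $\mathcal{T}^{0}$ in $\mathfrak{G}(\Sigma\mathcal{X}^{\perp_{\mathcal{T}^{0}}}*\mathcal{X})$, going either way around the square. Each object of the tilted heart has the form of an extension $\Sigma y\to M\to x$ with $y\in\mathcal{X}^{\perp_{\mathcal{T}^{0}}}$ and $x\in\mathcal{X}$. Under $Y$ this becomes an extension with first term in $\Sigma\mathfrak{G}(\mathcal{X})^{\perp_{Y(\mathcal{T}^{0})}}$ (using the Proposition $\mathfrak{G}(\mathcal{X}^{\perp_{\mathcal{T}^{0}}})=\mathfrak{G}(\mathcal{X})^{\perp_{Y(\mathcal{T}^{0})}}$) and last term in $\mathfrak{G}(\mathcal{X})$. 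This shows that the two routes give the same subcategory and the same torsion data.

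The main obstacle is bookkeeping rather than substance: one must make precise that $\mathfrak{G}(\cdot)$ of a subcategory lying in a coaisle is computed by $Y$ only up to isomorphism. I would handle this by working throughout with the replete images $Y(\cdot)$ and citing Lemma \ref{thm2}(2) at each identification.
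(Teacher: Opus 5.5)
Your proposal is correct and follows the paper's proof. Theorem \ref{thmA} identifies the two tilted $t$-structures on $\mathfrak{G}(\mathcal{T})$, and hence their hearts. Lemma \ref{lem2} together with Lemma \ref{thm2}(1),(2) identifies the heart of the lifted tilt with $Y(\Sigma\mathcal{X}^{\perp_{\mathcal{T}^{0}}}*\mathcal{X})\simeq\mathfrak{G}(\Sigma\mathcal{X}^{\perp_{\mathcal{T}^{0}}}*\mathcal{X})$ and gives both vertical equivalences. Your final paragraph is not needed, since your third paragraph already gives the equality of hearts via Theorem \ref{thmA}; on its own it would only prove the inclusion $Y(\Sigma\mathcal{X}^{\perp_{\mathcal{T}^{0}}}*\mathcal{X})\subseteq\Sigma\mathfrak{G}(\mathcal{X})^{\perp_{Y(\mathcal{T}^{0})}}*\mathfrak{G}(\mathcal{X})$.
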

\begin{proof} By Theorem \ref{thmA}, we have the commutative diagram
\begin{center} $\xymatrix@C=25pt@R=20pt{
(\mathcal{T}^{\leq0},\mathcal{T}^{\geq0})\ar[d]^{\mathfrak{G}(-)}\ar[r]^{T_{\mathrm{HRS}}^{\mathcal{X}}} &(\mu^{\mathrm{L}}_{\mathcal{X}}\mathcal{T}^{\leq0},\mu^{\mathrm{L}}_{\mathcal{X}}\mathcal{T}^{\geq0})\ar[d]^{\mathfrak{G}(-)}&\\
(\mathfrak{G}(\mathcal{T}^{\leq0}),\mathfrak{G}(\mathcal{T}^{\geq0}))\ar[r]^{T_{\mathrm{HRS}}^{\mathfrak{G}(\mathcal{X})}}& (\mathfrak{G}(\mu^{\mathrm{L}}_{\mathcal{X}}\mathcal{T}^{\leq0}),\mathfrak{G}(\mu^{\mathrm{L}}_{\mathcal{X}}\mathcal{T}^{\geq0})),&}$
\end{center}and hence $\mathfrak{G}(\Sigma\mathcal{X}^{\perp_{\mathcal{T}^{0}}}*\mathcal{X})=\Sigma(\mathfrak{G}(\mathcal{X})^{\perp_{\mathfrak{G}(\mathcal{T}^{0})}})\ast\mathfrak{G}(\mathcal{X})$. Note that $(\mu^{\mathrm{L}}_{\mathcal{X}}\mathcal{T}^{\leq0},\mu^{\mathrm{L}}_{\mathcal{X}}\mathcal{T}^{\geq0})$ is a $t$-structure on $\mathcal{T}$ with the heart $\Sigma\mathcal{X}^{\perp_{\mathcal{T}^{0}}}*\mathcal{X}$ and it is extendable with respect to the good metric $\mathfrak{B}$ by Lemma \ref{lem2}. It follows from Lemma \ref{thm2} (2) that
\begin{center}
$\mathcal{T}^{0}\rightarrow\mathfrak{G}(\mathcal{T}^{0})$ and $\Sigma\mathcal{X}^{\perp_{\mathcal{T}^{0}}}*\mathcal{X}\rightarrow\mathfrak{G}(\Sigma\mathcal{X}^{\perp_{\mathcal{T}^{0}}}*\mathcal{X})$
\end{center}
 are equivalences. So the conclusion holds.
\end{proof}

We recall a fact of iteration HRS-tilt from \cite[Chapter 1]{HRS}. Let $(\mathcal{T}_{1}^{\leq0},\mathcal{T}_{1}^{\geq0})$ be a $t$-structure on $\mathcal{T}$ with a torsion pair $\tau_{1}=(\mathcal{X},\mathcal{Y})$ of the heart $\mathcal{T}_{1}^{0}$, we have
\begin{align*}
&\mathcal{T}_{2}^{0}=\Sigma\mathcal{Y}\ast\mathcal{X} &&  \tau_{2}=(\Sigma\mathcal{Y},\mathcal{X}),\\
&\mathcal{T}_{3}^{0}=\Sigma\mathcal{X}\ast\Sigma\mathcal{Y}=\Sigma\mathcal{T}_{1}^{0} && \tau_{3}=(\Sigma\mathcal{X},\Sigma\mathcal{Y}),\\
&\mathcal{T}_{4}^{0}=\Sigma^{2}\mathcal{Y}\ast\Sigma\mathcal{X}&& \tau_{4}=(\Sigma^{2}\mathcal{Y},\Sigma\mathcal{X}),\\
&\qquad{\vdots}&& \qquad{\vdots}\\
&\mathcal{T}_{2i+1}^{0}=\Sigma^{i}\mathcal{X}\ast\Sigma^{i}\mathcal{Y}=\Sigma^{i}\mathcal{T}_{1}^{0} && \tau_{2i+1}=(\Sigma^{i}\mathcal{X},\Sigma^{i}\mathcal{Y}),\\
&\mathcal{T}_{2i+2}^{0}=\Sigma^{i+1}\mathcal{Y}\ast\Sigma^{i}\mathcal{X} && \tau_{2i+2}=(\Sigma^{i+1}\mathcal{Y},\Sigma^{i}\mathcal{X})\\
\end{align*}
for $i\geq2$.

Consider the torsion pair $(\mathcal{X},\mathcal{X}^{\perp})$ of the heart $\mathcal{T}_{1}^{0}$, we have the following corollary.

\begin{cor} Let $(\mathcal{T}_{1}^{\leq0},\mathcal{T}_{1}^{\geq0})$ be a $t$-structure on $\mathcal{T}$. Assume that $\tau_{1}=(\mathcal{X},\mathcal{X}^{\perp})$ is a torsion pair of the heart $\mathcal{T}_{1}^{0}$ and $\mathcal{T}_{i+1}^{\leq0}=\mu^{\mathrm{L}}_{\tau_{i}}\mathcal{T}_{i}^{\leq0}$, $\mathcal{T}_{i+1}^{\geq0}=\mu^{\mathrm{L}}_{\tau_{i}}\mathcal{T}_{i}^{\geq0}$ for any $i\geq1$. If $(\mathcal{T}_{1}^{\leq0},\mathcal{T}_{1}^{\geq0})$ is extendable with respect to $\mathfrak{B}$, then the squares in the following diagram are commutative
\begin{center} $\xymatrix@C=15pt@R=15pt{
(\mathcal{T}_{1}^{\leq0},\mathcal{T}_{1}^{\geq0})\ar[d]^{\mathfrak{G}(-)}\ar[r]^{T_{\mathrm{HRS}}^{\mathcal{X}}} &
(\mathcal{T}_{2}^{\leq0},\mathcal{T}_{2}^{\geq0})\ar[d]^{\mathfrak{G}(-)}\ar[r]^{T_{\mathrm{HRS}}^{\Sigma\mathcal{X}^{\perp}}}& (\mathcal{T}_{3}^{\leq0},\mathcal{T}_{3}^{\geq0})\ar[d]^{\mathfrak{G}(-)}\ar[r]^{T_{\mathrm{HRS}}^{\Sigma\mathcal{X}}}& (\mathcal{T}_{4}^{\leq0},\mathcal{T}_{4}^{\geq0})\ar[d]^{\mathfrak{G}(-)}\ar[r]^{T_{\mathrm{HRS}}^{\Sigma^{2}\mathcal{X}^{\perp}}}& \cdots \\
(\mathfrak{G}(\mathcal{T}_{1}^{\leq0}),\mathfrak{G}(\mathcal{T}_{1}^{\geq0}))\ar[r]^{T_{\mathrm{HRS}}^{\mathfrak{G}(\mathcal{X})}}& (\mathfrak{G}(\mathcal{T}_{2}^{\leq0}),\mathfrak{G}(\mathcal{T}_{2}^{\geq0}))\ar[r]^{T_{\mathrm{HRS}}^{\mathfrak{G}(\Sigma\mathcal{X}^{\perp})}}& (\mathfrak{G}(\mathcal{T}_{3}^{\leq0}),\mathfrak{G}(\mathcal{T}_{3}^{\geq0}))\ar[r]^{T_{\mathrm{HRS}}^{\mathfrak{G}(\Sigma\mathcal{X})}}& (\mathfrak{G}(\mathcal{T}_{4}^{\leq0}),\mathfrak{G}(\mathcal{T}_{4}^{\geq0}))\ar[r]^{}& \cdots .&}$
\end{center}
Further, we have \begin{center}
$\mathfrak{G}(\mathcal{T}_{i+1}^{\leq0})=\mu^{\mathrm{L}}_{\mathfrak{G}(\tau_{i})}\mathfrak{G}(\mathcal{T}_{i}^{\leq0})$,
$\mathfrak{G}(\mathcal{T}_{i+1}^{\geq0})=\mu^{\mathrm{L}}_{\mathfrak{G}(\tau_{i})}\mathfrak{G}(\mathcal{T}_{i}^{\geq0})$.
\end{center}
\end{cor}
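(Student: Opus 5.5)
The plan is induction on $i$, applying Theorem \ref{thmA} once at each step of the tilting chain. Before starting, I need to be precise about which torsion class is used at step $i$. By the HRS iteration table, the heart $\mathcal{T}_{i}^{0}$ carries the torsion pair $\tau_{i}$. Its torsion class is
\[
\mathcal{X}_{1}=\mathcal{X},\quad \mathcal{X}_{2}=\Sigma\mathcal{X}^{\perp},\quad \mathcal{X}_{3}=\Sigma\mathcal{X},\quad \mathcal{X}_{4}=\Sigma^{2}\mathcal{X}^{\perp},\ \ldots
\]
Accordingly, $\mu^{\mathrm{L}}_{\tau_{i}}$ means $\mu^{\mathrm{L}}_{\mathcal{X}_{i}}$, and $\mu^{\mathrm{L}}_{\mathfrak{G}(\tau_{i})}$ means $\mu^{\mathrm{L}}_{\mathfrak{G}(\mathcal{X}_{i})}$, the tilt taken with respect to the heart $Y(\mathcal{T}_{i}^{0})$ of the lifted $t$-structure.

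First I will show by induction that every $(\mathcal{T}_{i}^{\leq0},\mathcal{T}_{i}^{\geq0})$ is a $t$-structure on $\mathcal{T}$ that is extendable with respect to $\mathfrak{B}$, and that $\mathcal{X}_{i}$ is a torsion class of $\mathcal{T}_{i}^{0}$. The base case $i=1$ is the hypothesis of the corollary.
\begin{itemize}
\item For the inductive step, the HRS lemma (\cite[Chapter 1, Proposition 2.1]{HRS}) shows that $(\mathcal{T}_{i+1}^{\leq0},\mathcal{T}_{i+1}^{\geq0})$ is a $t$-structure with heart $\mathcal{T}_{i+1}^{0}$.
\item Lemma \ref{lem2} shows that this tilted $t$-structure is again extendable.
\item The iteration table gives $\tau_{i+1}$ as a torsion pair of $\mathcal{T}_{i+1}^{0}$, so its first component $\mathcal{X}_{i+1}$ is a torsion class there.
\end{itemize}
To justify the table's claim properly, I will check directly that $(\mathcal{Y},\mathcal{X})$ becomes $(\Sigma\mathcal{Y},\mathcal{X})$ in $\Sigma\mathcal{Y}\ast\mathcal{X}$. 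This uses the standard facts that short exact sequences in a heart are exactly the triangles with all three terms in the heart, and that $\mathrm{Hom}(\Sigma\mathcal{Y},\mathcal{X})=0$.

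With these hypotheses in hand, Theorem \ref{thmA} applies to $(\mathcal{T}_{i}^{\leq0},\mathcal{T}_{i}^{\geq0})$ and $\mathcal{X}_{i}$ for every $i$. It gives
\[
\mathfrak{G}(\mathcal{T}_{i+1}^{\leq0})=\mathfrak{G}(\mu^{\mathrm{L}}_{\mathcal{X}_{i}}\mathcal{T}_{i}^{\leq0})=\mu^{\mathrm{L}}_{\mathfrak{G}(\mathcal{X}_{i})}\mathfrak{G}(\mathcal{T}_{i}^{\leq0}),
\]
and the analogous identity for the coaisles. These are exactly the displayed equalities of the corollary, and each one is precisely the commutativity of the $i$-th square of the diagram. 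The bottom-row arrows $T_{\mathrm{HRS}}^{\mathfrak{G}(\mathcal{X}_{i})}$ make sense because Lemma \ref{lem1} and Lemma \ref{lem3} show that $\mathfrak{G}(\mathcal{X}_{i})$ is a torsion class of the heart $Y(\mathcal{T}_{i}^{0})$ of $(\mathfrak{G}(\mathcal{T}_{i}^{\leq0}),\mathfrak{G}(\mathcal{T}_{i}^{\geq0}))$. Lemma \ref{thm2}(1) supplies that this heart is indeed $Y(\mathcal{T}_{i}^{0})$.

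I also want the bottom row to be an honest iteration inside $\mathfrak{G}(\mathcal{T})$: the target of the $i$-th bottom arrow should coincide with the source of the $(i+1)$-st. The equalities from Theorem \ref{thmA} give this automatically. I would add that the labels match as well, namely $\mathfrak{G}(\Sigma\mathcal{X}^{\perp})=\Sigma\mathfrak{G}(\mathcal{X}^{\perp})=\Sigma(\mathfrak{G}(\mathcal{X})^{\perp_{Y(\mathcal{T}^{0})}})$. This follows from \cite[Lemma 2.14]{BCRPZ} together with the Proposition identifying $\mathfrak{G}(\mathcal{X}^{\perp_{\mathcal{T}^{0}}})$.

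The main obstacle is bookkeeping rather than new mathematics. One must verify that $\tau_{i+1}$ really is a torsion pair in $\mathcal{T}_{i+1}^{0}$, in particular the periodicity $\mathcal{T}_{3}^{0}=\Sigma\mathcal{T}_{1}^{0}$, and that each torsion class $\mathcal{X}_{i}$ lies in $\mathcal{T}_{i}^{0}$. The latter is needed so that Lemma \ref{lem1} applies, which in turn rests on $\mathcal{X}_{i}\subseteq\mathcal{T}_{i}^{\geq0}$ via Lemma \ref{thm2}(2). I would settle both points with the HRS heart description $\Sigma\mathcal{X}_{i}^{\perp}\ast\mathcal{X}_{i}$ and a direct orthogonality check. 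After that, the corollary follows formally from repeated application of Theorem \ref{thmA}.
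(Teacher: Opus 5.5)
Your proposal is correct and follows the paper's approach: the paper's proof is just ``this follows from Theorem \ref{thmA}'', and your argument applies Theorem \ref{thmA} at each step of the chain in the same way. You also spell out what the paper leaves implicit, namely that Lemma \ref{lem2} propagates extendability and the HRS iteration supplies the torsion class $\mathcal{X}_i$ of $\mathcal{T}_i^0$, so the hypotheses of Theorem \ref{thmA} hold at every stage (one small correction: the fact that $\mathfrak{G}(\mathcal{X}_i)$ is a torsion class of $Y(\mathcal{T}_i^0)$ comes from Lemma \ref{lem1} alone, not from Lemma \ref{lem3}).
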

\begin{proof} This follows from Theorem \ref{thmA}.
\end{proof}

\bigskip
\section{\bf Applications}
Let $\mathcal{T}$ be a $K$-linear essentially small triangulated category with shift functor $\Sigma$, in this section, we apply Theorem \ref{thmA} to bounded $t$-structures and ST-pairs.

Before presenting a classical example of good metric, we first review some notations and notions from \cite[Reminders 1.1 and 0.1; Definition 1.3]{N3} (see also \cite[Subsection 2.2]{BV}).

Let $\mathcal{X}$ be a class of objects in $\mathcal{T}$ and $G$ an object of $\mathcal{T}$. We use $\mathrm{smd}(\mathcal{X})$ to denote the smallest full subcategory of $\mathcal{T}$ which contains $\mathcal{X}$ and is closed under direct summands. We denote by $\mathrm{add}(\mathcal{X})$ the smallest full subcategory of $\mathcal{T}$ which contains $\mathcal{X}$ and is closed under finite direct sums and direct summands. Let $i>0$, the subcategory $\mathrm{coprod}_{i}(\mathcal{X})$ of $\mathcal{T}$ is defined inductively as follows
\begin{center}
$\mathrm{coprod}_{1}(\mathcal{X}):=\mathrm{add}(\mathcal{X}),~\mathrm{coprod}_{i+1}(\mathcal{X}):=
\mathrm{coprod}_{1}(\mathcal{X})\ast\mathrm{coprod}_{i}(\mathcal{X})$.
\end{center}
For $a,b\in\mathbb{Z}\bigcup\infty$ with $a\leq b$, define $G[a,b]:=\{\Sigma^{-j}G~|~j\in\mathbb{Z},~a\leq j\leq b\}$. For $i>0$, let
\begin{center}
$\langle G\rangle_{i}^{[a,b]}:=\mathrm{smd}(\mathrm{coprod}_{i}(G[a,b])),~\langle G\rangle^{[a,b]}:=\bigcup\limits_{i\in\mathbb{Z}}\langle G\rangle_{i}^{[a,b]}$.
\end{center}

\begin{rmk}\label{exa1} Let $G$ be an object of $\mathcal{T}$. Then the sequence $\{\langle G\rangle^{[-\infty,-n]}\}_{n\in\mathbb{N}}$ on $\mathcal{T}$ generated by $G$ forms a good metric. A good metric $\mathfrak{B}=\{\mathfrak{B}_{n}\}_{n\in\mathbb{N}}$ on $\mathcal{T}$ is called a  \emph{G-good metric} if it is equivalent to the good metric $\{\langle G\rangle^{[-\infty,-n]}\}_{n\in\mathbb{N}}$, see \cite[Definition 1.4]{BCRPZ}.
\end{rmk}

We denote the completion of $\mathcal{X}$ in Definition \ref{dfn1} with respect to a $G$-good metric on $\mathcal{T}$ by $\mathfrak{G}_{G}(\mathcal{X})$.

\begin{cor}\label{cor1} Let $G$ be an object of $\mathcal{T}$. Assume that $(\mathcal{T}^{\leq0},\mathcal{T}^{\geq0})$ is a bounded $t$-structure on $\mathcal{T}$ and $\mathcal{X}$ is a torsion class of the heart $\mathcal{T}^{0}$. Then
\begin{center} $\mathfrak{G}_{G}(\mu^{\mathrm{L}}_{\mathcal{X}}\mathcal{T}^{\leq0})=\mu^{\mathrm{L}}_{\mathfrak{G}_{G}(\mathcal{X})}\mathfrak{G}_{G}(\mathcal{T}^{\leq0})$,~
$\mathfrak{G}_{G}(\mu^{\mathrm{L}}_{\mathcal{X}}\mathcal{T}^{\geq0})=\mu^{\mathrm{L}}_{\mathfrak{G}_{G}(\mathcal{X})}\mathfrak{G}_{G}(\mathcal{T}^{\geq0})$.
\end{center}
\end{cor}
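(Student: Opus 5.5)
The plan is to reduce Corollary \ref{cor1} directly to Theorem \ref{thmA}. The only hypothesis of Theorem \ref{thmA} not already given is that $(\mathcal{T}^{\leq0},\mathcal{T}^{\geq0})$ is extendable with respect to some $G$-good metric $\mathfrak{B}$. By Lemma 2.14(3) of \cite{BCRPZ}, equivalent good metrics give the same completions $\mathfrak{G}$. So $\mathfrak{G}_{G}$ does not depend on the choice of $G$-good metric, and we may fix the metric $\mathfrak{B}_{n}=\langle G\rangle^{[-\infty,-n]}$ of Remark \ref{exa1}. Once extendability is established, Theorem \ref{thmA} applied with this $\mathfrak{B}$ yields both equalities verbatim.

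\textbf{Key step: extendability.} We must find $n$ with $\langle G\rangle^{[-\infty,-n]}\subseteq\mathcal{T}^{\leq0}$.

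First, boundedness of the $t$-structure gives $G\in\mathcal{T}^{b}\subseteq\mathcal{T}^{-}$. Hence there is an integer $m\geq 0$ with $G\in\mathcal{T}^{\leq m}$.

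Next, $G[-\infty,-n]$ consists of the objects $\Sigma^{-j}G$ with $j\leq -n$, that is, $\Sigma^{s}G$ with $s\geq n$. For such $s$ we have $\Sigma^{s}G\in\mathcal{T}^{\leq m-s}\subseteq\mathcal{T}^{\leq m-n}$. Taking $n=m$ puts every generator in $\mathcal{T}^{\leq0}$.

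Finally, $\mathcal{T}^{\leq0}$ is closed under finite direct sums, direct summands and extensions. It therefore contains $\mathrm{add}(G[-\infty,-m])$. By induction on $i$ it contains $\mathrm{coprod}_{i}(G[-\infty,-m])$, and hence $\langle G\rangle_{i}^{[-\infty,-m]}$ for every $i$. Taking the union gives $\mathfrak{B}_{m}\subseteq\mathcal{T}^{\leq0}$.

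\textbf{Main obstacle.} I do not expect a real obstacle. The only points needing care are bookkeeping:
\begin{enumerate}
\item the sign convention in $G[a,b]$ and $\mathcal{T}^{\leq n}=\Sigma^{-n}\mathcal{T}^{\leq0}$ must be tracked correctly;
\item the statement must be justified as independent of the chosen $G$-good metric, which follows from metric equivalence and Lemma 2.14(3).
\end{enumerate}
Only the bounded-above half of boundedness is used, so the corollary actually holds for bounded-above $t$-structures.
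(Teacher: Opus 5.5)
Your proof is correct and follows the paper's route: show that $(\mathcal{T}^{\leq0},\mathcal{T}^{\geq0})$ is extendable with respect to a $G$-good metric, then apply Theorem~\ref{thmA}. The only difference is that the paper cites \cite[Lemma 3.3]{BCRPZ} for extendability, whereas you prove it directly from $G\in\mathcal{T}^{\leq m}$ and the closure properties of the aisle; this also makes explicit that only bounded-aboveness is needed.
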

\begin{proof} By \cite[Lemma 3.3]{BCRPZ}, $(\mathcal{T}^{\leq0},\mathcal{T}^{\geq0})$ is extendable with respect to the $G$-good metric on $\mathcal{T}$. The assertions follow from Theorem \ref{thmA}.
\end{proof}

We collect the definition and some properties of ST-pairs on $\mathcal{T}$ from \cite{AMY}.

Let $\mathcal{T}$ be a Krull-Schmidt triangulated category with the shift functor $\Sigma$ and $M$ be an object of $\mathcal{T}$, $M$ is called \emph{basic} if $M\simeq M_{1}\oplus \cdots\oplus M_{n}$, where the $M_{i}$ are indecomposable and pairwise non-isomorphic. In this case, denote $|M|=n$. We note that the $n$ is unique. Recall that $M$ is called \emph{presilting} if $\mathrm{Hom}_{\mathcal{T}}(M,\Sigma^{i}M)=0$ for any $i>0$. $M$ is called \emph{silting} if $M$ is presilting and $\mathrm{thick}M=\mathcal{T}$.

For a presilting object $M$ of $\mathcal{T}$, a thick subcategory $\mathcal{S}$ of $\mathcal{T}$ and an integer $i$, we define full subcategories of $\mathcal{T}$ as follows:
\begin{center}
$\mathcal{S}_{M}^{\leq i}:=\{X\in\mathcal{S} | \mathrm{Hom}_{\mathcal{T}}(M,\Sigma^{>i}X)=0\}$,\\
$\mathcal{S}_{M}^{\geq i}:=\{X\in\mathcal{S} | \mathrm{Hom}_{\mathcal{T}}(M,\Sigma^{<i}X)=0\}$,\\
$\mathcal{S}_{M}^{ i}:=\mathcal{S}_{M}^{\leq i}\bigcap\mathcal{S}_{M}^{\geq i}$.
\end{center}
It is easy to see that the following facts hold:\\ (1) $\mathcal{S}_{M}^{\leq i}=\Sigma^{-i}\mathcal{S}_{M}^{\leq 0}=\mathcal{S}_{\Sigma^{-i}M}^{\leq 0}$ and $\mathcal{S}_{M}^{\geq i}=\Sigma^{-i}\mathcal{S}_{M}^{\geq 0}=\mathcal{S}_{\Sigma^{-i}M}^{\geq 0}$,\\
(2) $\mathcal{S}_{M}^{\leq i}=\mathcal{T}_{M}^{\leq i}\bigcap\mathcal{S}$ and $\mathcal{S}_{M}^{\geq i}=\mathcal{T}_{M}^{\geq i}\bigcap\mathcal{S}$.

\begin{df}$($\cite[Definition 4.3]{AMY}$)$\label{dfn2} Let $\mathcal{C}$ and $\mathcal{D}$ be thick subcategories of $\mathcal{T}$. The pair $(\mathcal{C}$,$\mathcal{D})$ is called an \emph{ST-pair} on $\mathcal{T}$ if there exists a silting object $M$ of $\mathcal{C}$ such that

(ST1) $\mathrm{Hom}_{\mathcal{T}}(M,T)$ is finite-dimensional for any object $T$ of $\mathcal{T}$.

(ST2) $(\mathcal{T}_{M}^{\leq0},\mathcal{T}_{M}^{\geq0})$ is a $t$-structure on $\mathcal{T}$.

(ST3) $\mathcal{T}=\bigcup_{i\in\mathbb{Z}}\mathcal{T}_{M}^{\leq i}$ and $\mathcal{D}=\bigcup_{i\in\mathbb{Z}}\mathcal{T}_{M}^{\geq i}$.
\end{df}

\begin{rmk}\label{rmk1} By \cite[Remark 4.5]{AMY}, the condition (ST3) in Definition \ref{dfn2} is equivalent to $\mathcal{T}_{M}^{\geq0}\subseteq\mathcal{D}$ and $(\mathcal{D}_{M}^{\leq0},\mathcal{D}_{M}^{\geq0})$ is a bounded $t$-structure on $\mathcal{D}$.
\end{rmk}

In the rest of this section, we assume that $\mathcal{T}$ is a $K$-linear Krull-Schmidt essentially small triangulated category with shift functor $\Sigma$. Let ($\mathcal{C}$,$\mathcal{D}$) be an ST-pair on $\mathcal{T}$, $M$ a basic silting object of $\mathcal{C}$ with $n=|M|$ and $I$ a subset of $\{1,2,\ldots,n\}$. Assume that $M_{I}=\bigoplus_{i\in I}M_{i}$ and $S_{I}=\bigoplus_{i\in I}S_{i}$, where each $M_{i}$ is indecomposable and $S_{i}$ is a complete set of pairwise non-isomorphic simple objects of $\mathcal{D}_{M}^{0}$. Define the full subcategory $\mathcal{X}_{S_{I}}$ of $\mathcal{D}_{M}^{0}$ as
\begin{center}
$\mathcal{X}_{S_{I}}:=\{X\in\mathcal{D}_{M}^{0}~|~\mathrm{Hom}_{\mathcal{T}}(X,S_{I})=0\}$.
\end{center}
Then $\mathcal{X}_{S_{I}}$ is a torsion class of $\mathcal{D}_{M}^{0}$. We denote by $\mu_{M_{I}}^{L}(M)$ the left mutation of the silting object $M$. Then $\mu_{M_{I}}^{L}(M)$ is a silting object of $\mathcal{C}$ by \cite[Theorem 2.31]{AI}. It follows from \cite[Proposition 7.7]{AMY} that $\mathcal{D}_{\mu_{M_{I}}^{L}(M)}^{\leq0}=\mathcal{\mu}_{\mathcal{X}_{S_{I}}}^{L}\mathcal{D}_{M}^{\leq0}$. Thus we have the following commutative diagram:
\begin{center} $\xymatrix@C=25pt@R=20pt{
M\ar[d]^{\Psi}\ar[r]^{\mu} &\mu_{M_{I}}^{L}(M)\ar[d]^{\Psi}&\\
(\mathcal{D}_{M}^{\leq0},\mathcal{D}_{M}^{\geq0})\ar[r]^{T_{\mathrm{HRS}}^{\mathcal{X}_{S_{I}}}}& (\mathcal{D}_{\mu_{M_{I}}^{L}(M)}^{\leq0},\mathcal{D}_{\mu_{M_{I}}^{L}(M)}^{\geq0}),&}$
\end{center}where the horizontal arrow $\mu$ indicates the left mutation of a silting object $M$ and the two vertical arrows $\Psi$ represent the correspondence in \cite[Theorem 5.5]{AMY}.

Combine the above commutative diagram with Theorem \ref{thmA}, we have the following corollary which provides the relation between the left mutation of a silting object on $\mathcal{T}$ and the HRS-tilt $t$-structure on $\mathfrak{G}_{G}(\mathcal{T})$.

\begin{cor}\label{cor2} Let $(\mathcal{C}$,$\mathcal{D})$ be an ST-pair on $\mathcal{T}$. Assume that $M\in\mathcal{C}$ is a basic silting object with $|M|=n$, $G\in\mathcal{D}$ and $I$ is a subset of $\{1,2,\ldots,n\}$. Then we have the following commutative diagram:
\begin{center} $\xymatrix@C=25pt@R=20pt{
M\ar[d]^{\mathfrak{G}_{G}(\Psi(-))}\ar[r]^{\mu} &\mu_{M_{I}}^{L}(M)\ar[d]^{\mathfrak{G}_{G}(\Psi(-))}&\\
(\mathfrak{G}_{G}(\mathcal{D}_{M}^{\leq0}),\mathfrak{G}_{G}(\mathcal{D}_{M}^{\geq0}))\ar[r]^{T_{\mathrm{HRS}}^{\mathfrak{G}(\mathcal{X}_{S_{I}})}}& (\mathfrak{G}_{G}(\mathcal{D}_{\mu_{M_{I}}^{L}(M)}^{\leq0}),\mathfrak{G}_{G}(\mathcal{D}_{\mu_{M_{I}}^{L}(M)}^{\geq0})),&}$
\end{center}
\end{cor}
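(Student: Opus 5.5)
The plan is to chain the square \cite[Proposition 7.7]{AMY} with Corollary \ref{cor1} applied to the bounded $t$-structure $(\mathcal{D}_{M}^{\leq0},\mathcal{D}_{M}^{\geq0})$ on $\mathcal{D}$. I take the completions in Corollary \ref{cor2} to be computed inside the ambient category $\mathcal{D}$, so the metric is the $G$-good metric on $\mathcal{D}$; this is why $G\in\mathcal{D}$ is assumed.

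\textbf{Step 1: the top square and boundedness on $\mathcal{D}$.} The top horizontal arrow and the vertical maps $\Psi$ come from \cite[Theorem 5.5]{AMY}. The square with $T_{\mathrm{HRS}}^{\mathcal{X}_{S_{I}}}$ commutes by the identity
$$\mathcal{D}_{\mu_{M_{I}}^{L}(M)}^{\leq0}=\mu^{L}_{\mathcal{X}_{S_{I}}}\mathcal{D}_{M}^{\leq0},$$
which is \cite[Proposition 7.7]{AMY}. Taking right perpendiculars inside $\mathcal{D}$ gives the same equality for the coaisles, since a $t$-structure is determined by its aisle. By Remark \ref{rmk1}, $(\mathcal{D}_{M}^{\leq0},\mathcal{D}_{M}^{\geq0})$ is a bounded $t$-structure on $\mathcal{D}$. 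Also $\mathcal{X}_{S_{I}}$ is a torsion class in its heart $\mathcal{D}_{M}^{0}$, as recalled just before the corollary.

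\textbf{Step 2: apply Corollary \ref{cor1}.} Apply it with $\mathcal{T}$ replaced by $\mathcal{D}$. This is legitimate because $\mathcal{D}$ is again $K$-linear, Krull–Schmidt and essentially small, and it carries the $G$-good metric $\{\langle G\rangle^{[-\infty,-n]}\}_{n\in\mathbb{N}}$ from Remark \ref{exa1}. Corollary \ref{cor1} yields
$$\mathfrak{G}_{G}(\mu^{L}_{\mathcal{X}_{S_{I}}}\mathcal{D}_{M}^{\leq0})=\mu^{L}_{\mathfrak{G}_{G}(\mathcal{X}_{S_{I}})}\mathfrak{G}_{G}(\mathcal{D}_{M}^{\leq0}),$$
together with the analogous equality for the coaisles.

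\textbf{Step 3: substitute.} Substituting Step 1 into the left-hand side gives
$$\mathfrak{G}_{G}(\mathcal{D}_{\mu_{M_{I}}^{L}(M)}^{\leq0})=\mu^{L}_{\mathfrak{G}_{G}(\mathcal{X}_{S_{I}})}\mathfrak{G}_{G}(\mathcal{D}_{M}^{\leq0}),$$
and likewise for the coaisles. This is exactly the commutativity of the stated diagram: going right then down sends $M$ to $\mathfrak{G}_{G}(\Psi(\mu^{L}_{M_{I}}(M)))$, while going down then right produces the HRS-tilt of $\mathfrak{G}_{G}(\Psi(M))$ at $\mathfrak{G}_{G}(\mathcal{X}_{S_{I}})$.

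\textbf{Expected obstacle.} The main point needing care is that the hypotheses of Corollary \ref{cor1} genuinely hold on $\mathcal{D}$. In particular, one needs \cite[Lemma 3.3]{BCRPZ} (a bounded $t$-structure is extendable for a $G$-good metric) in the setting of $\mathcal{D}$ with $G\in\mathcal{D}$. If that lemma requires $G$ to be a classical generator, I would instead check extendability directly. Boundedness gives $G\in\mathcal{D}_{M}^{\leq N}$ for some $N$. Since aisles are closed under extensions, positive shifts and summands, this gives $\langle G\rangle^{[-\infty,-N]}\subseteq\mathcal{D}_{M}^{\leq0}$. That inclusion is precisely extendability, after which Theorem \ref{thmA} applies directly.
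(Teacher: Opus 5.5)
Your proposal is correct and follows the paper's proof. The paper likewise gets boundedness of $(\mathcal{D}_{M}^{\leq0},\mathcal{D}_{M}^{\geq0})$ on $\mathcal{D}$ from Remark \ref{rmk1}, applies Corollary \ref{cor1} on $\mathcal{D}$ (extendability via \cite[Lemma 3.3]{BCRPZ}, then Theorem \ref{thmA}), and combines this with $\mathcal{D}_{\mu_{M_{I}}^{L}(M)}^{\leq0}=\mu^{L}_{\mathcal{X}_{S_{I}}}\mathcal{D}_{M}^{\leq0}$ from \cite[Proposition 7.7]{AMY}; your fallback direct check of extendability (from $G\in\mathcal{D}_{M}^{\leq N}$ to $\langle G\rangle^{[-\infty,-N]}\subseteq\mathcal{D}_{M}^{\leq0}$) is simply the content of that lemma and is also fine.
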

\begin{proof} Since $(\mathcal{C}$,$\mathcal{D})$ is an ST-pair on $\mathcal{T}$, $(\mathcal{D}_{M}^{\leq0},\mathcal{D}_{M}^{\geq0})$ is a bounded $t$-structure on $\mathcal{D}$ by Remark \ref{rmk1}. So it follows from Corollary \ref{cor1} that the result holds.
\end{proof}

Let $A$ be a finite-dimensional $K$-algebra. We denote $\mathrm{D}^{\mathrm{b}}(\mathrm{mod}A)$ the bounded derived
category of $\mathrm{mod}A$. Then $\mathrm{D}^{\mathrm{b}}(\mathrm{mod}A)$ is a $K$-linear Krull-Schmidt triangulated category by \cite[Theorem 1]{A1} and \cite[Lemma 2.1]{HKP}. It is noticed that $\mathrm{D}^{\mathrm{b}}(\mathrm{mod}A)$ is also essentially small. We denote by $\mathrm{proj}A$ the category of finite-dimensional projective $A$-modules and $\mathrm{K}^{\mathrm{b}}(\mathrm{proj}A)$ the bounded homotopy category of $\mathrm{proj}A$.

Let
\begin{center}
$\mathrm{D}^{\leq0}_{A}=\{X\in\mathrm{D^{b}}(\mathrm{mod}A)~|~H^{i}(X)=0,~\forall~i>0\}$,\\
$\mathrm{D}^{\geq0}_{A}=\{X\in\mathrm{D^{b}}(\mathrm{mod}A)~|~H^{i}(X)=0,~\forall~i<0\}$.
\end{center}Then $(\mathrm{D}^{\leq0}_{A},\mathrm{D}^{\geq0}_{A})$ is a bounded $t$-structure on $\mathrm{D}^{\mathrm{b}}(\mathrm{mod}A)$ by \cite[Example 1.3.2(i)]{BBD}. It follows from \cite[Lemma 4.10]{AMY} that $(\mathrm{K}^{\mathrm{b}}(\mathrm{proj}A),\mathrm{D}^{\mathrm{b}}(\mathrm{mod}A))$ is an ST-pair on $\mathrm{D}^{\mathrm{b}}(\mathrm{mod}A)$ with a basic silting object $A$ on $\mathrm{K}^{\mathrm{b}}(\mathrm{proj}A)$. Assume that $|A|=n$ and $I$ is a subset of $\{1,2,\ldots,n\}$. Put $A_{I}=\bigoplus_{i\in I}A_{i}$ and $R_{I}=\bigoplus_{i\in I}R_{i}$, where each $A_{i}$ is indecomposable and $R_{i}$ is a complete set of pairwise non-isomorphic simple objects of $\mathrm{D}_{A}^{0}$. Define $\mathcal{X}_{R_{I}}=\{X\in\mathrm{D}_{A}^{0}~|~\mathrm{Hom}_{\mathcal{T}}(X,R_{I})=0\}$. Then we have the following corollary:

\begin{cor}\label{cor3} Let $A$ be a finite-dimensional $K$-algebra. Assume that $G\in\mathrm{D}^{\mathrm{b}}(\mathrm{mod}A)$, $|A|=n$, and $I$ is a subset of $\{1,2,\ldots,n\}$. Then we have the following commutative diagram:
\begin{center} $\xymatrix@C=25pt@R=20pt{
A\ar[d]^{\mathfrak{G}_{G}(\Psi(-))}\ar[r]^{\mu} &\mu_{A_{I}}^{L}(A)\ar[d]^{\mathfrak{G}_{G}(\Psi(-))}&\\
(\mathfrak{G}_{G}(\mathrm{D}^{\leq0}_{A}),\mathfrak{G}_{G}(\mathrm{D}^{\geq0}_{A}))\ar[r]^{T_{\mathrm{HRS}}^{\mathfrak{G}(\mathcal{X}_{R_{I}})}}& (\mathfrak{G}_{G}(\mathrm{D}^{\leq0}_{\mu_{A_{I}}^{L}(A)}),\mathfrak{G}_{G}(\mathrm{D}^{\geq0}_{\mu_{A_{I}}^{L}(A)})).&}$
\end{center}
\end{cor}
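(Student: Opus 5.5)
The statement to prove is Corollary \ref{cor3}, which I plan to obtain as the special case of Corollary \ref{cor2} for the ST-pair $(\mathrm{K}^{\mathrm{b}}(\mathrm{proj}A),\mathrm{D}^{\mathrm{b}}(\mathrm{mod}A))$ on $\mathcal{T}=\mathrm{D}^{\mathrm{b}}(\mathrm{mod}A)$. The standing hypotheses of Section 4 have already been recorded for this $\mathcal{T}$: it is $K$-linear, Krull--Schmidt and essentially small. By \cite[Lemma 4.10]{AMY}, this pair is an ST-pair with basic silting object $A$, and here $\mathcal{D}=\mathcal{T}$ and $G\in\mathcal{D}$.

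The first step is to identify the data. The $t$-structure $(\mathcal{D}_{A}^{\leq0},\mathcal{D}_{A}^{\geq0})$ consists of objects $X$ with $\mathrm{Hom}(A,\Sigma^{>0}X)=0$, respectively $\mathrm{Hom}(A,\Sigma^{<0}X)=0$. Since $\mathrm{Hom}(A,\Sigma^{i}X)\cong H^{i}(X)$, this $t$-structure coincides with the standard one $(\mathrm{D}^{\leq0}_{A},\mathrm{D}^{\geq0}_{A})$. The heart is therefore $\mathrm{mod}A$, whose simples $R_{i}$ correspond to the indecomposable projective summands $A_{i}$, so that $\mathcal{X}_{S_{I}}=\mathcal{X}_{R_{I}}$. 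Likewise, $\mathcal{D}^{\leq0}_{\mu^{L}_{A_I}(A)}=\mathrm{D}^{\leq0}_{\mu^{L}_{A_I}(A)}$ by definition.

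The second step is to invoke the bounded case. By Remark \ref{rmk1} (or directly from \cite[Example 1.3.2]{BBD}), the standard $t$-structure is bounded on $\mathcal{D}$. Corollary \ref{cor1} then gives the equalities $\mathfrak{G}_{G}(\mu^{L}_{\mathcal{X}_{R_I}}\mathrm{D}^{\leq0}_{A})=\mu^{L}_{\mathfrak{G}_G(\mathcal{X}_{R_I})}\mathfrak{G}_{G}(\mathrm{D}^{\leq0}_{A})$ and the analogue for the coaisles. Combining these equalities with \cite[Proposition 7.7]{AMY}, which gives $\mathrm{D}^{\leq0}_{\mu^{L}_{A_I}(A)}=\mu^{L}_{\mathcal{X}_{R_I}}\mathrm{D}^{\leq0}_{A}$, yields commutativity of the square. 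This is exactly the content of Corollary \ref{cor2} in this instance. For the coaisle, the coaisle of a $t$-structure is determined as the right perpendicular of its shifted aisle, so equality of aisles implies equality of coaisles.

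The only point needing care is the matching of the simple objects $R_{i}$ with the summands $A_{i}$, that is, choosing the indexing so that $\mathrm{Hom}(A_{i},R_{j})\neq0$ exactly when $i=j$. With this ordering, $\mathcal{X}_{R_I}$ is precisely the torsion class appearing in \cite[Proposition 7.7]{AMY}. Everything else is a direct application of Corollary \ref{cor2}.
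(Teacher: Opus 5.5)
Your proposal is correct and follows the paper's route. The paper proves this statement simply by specializing Corollary \ref{cor2} to the ST-pair $(\mathrm{K}^{\mathrm{b}}(\mathrm{proj}A),\mathrm{D}^{\mathrm{b}}(\mathrm{mod}A))$ from \cite[Lemma 4.10]{AMY}, and Corollary \ref{cor2} itself amounts to boundedness plus Corollary \ref{cor1}, combined with \cite[Proposition 7.7]{AMY}; this is exactly the chain you unpack, and your extra identifications are details the paper leaves implicit. Those details are that the $t$-structure defined by $A$ is the standard one via $\mathrm{Hom}(A,\Sigma^{i}X)\cong H^{i}(X)$, that the $R_i$ are indexed to match the $A_i$, and that the coaisle equality follows from the aisle equality.
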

\begin{proof} This follows Corollary \ref{cor2}.
\end{proof}

Let $A$ be a dg $K$-algebra and let $\mathbf{C}_{\mathrm{dg}}(A)$ be the dg category of (right) dg $A$-modules, see
\cite[Section 3.1]{K}. The dg category $\mathbf{C}_{\mathrm{dg}}(A)$ is a pretriangulated dg category. Denote by $\mathbf{K}(A):=H^{0}\mathbf{C}_{\mathrm{dg}}(A)$ the homotopy category of $\mathbf{C}_{\mathrm{dg}}(A)$. Then $\mathbf{K}(A)$ is a triangulated category with shift functor $[1]$ induced by the shift of dg modules. The derived category $\mathbf{D}(A)$ of dg $A$-modules is defined as the triangle quotient of $\mathbf{K}(A)$ by the full subcategory of $\mathbf{K}(A)$ consisting of acyclic dg $A$-modules, see \cite[Section 2.2]{K}. We denote by $\mathbf{per}(A)$ the \emph{perfect derived category} that is the thick subcategory of $\mathbf{D}(A)$ generated by $A_{A}$ and by $\mathbf{D}_{\mathrm{fd}}(A)$ the \emph{finite-dimensional derived category} that is full subcategory of $\mathbf{D}(A)$ consisting of dg $A$-modules $M$ whose total cohomology is finite-dimensional over $K$.

Let $\mathbf{D}^{\leq0}$ $($resp., $\mathbf{D}^{\geq0})$ be the full subcategory of $\mathbf{D}(A)$ consisting of the dg $A$-modules $M$ with $H^{p}(M)=0$ for each $p>0$ $($resp., for each $p<0)$. Then $(\mathbf{D}^{\leq0},\mathbf{D}^{\geq0})$ is a $t$-structure on $\mathbf{D}(A)$ by \cite[Example 3.19]{AMY} with the associated truncation functors are the standard truncations and $H^{0}:~\mathbf{D}(A)\rightarrow \mathrm{Mod}H^{0}(A)$ restricts to an equivalence between the heart $\mathbf{D}^{0}$ and $\mathrm{Mod}H^{0}(A)$, see \cite[Section 2.1]{A}.

Let $\mathbf{D}^{-}_{\mathrm{fd}}(A)$ be the full subcategory of $\mathbf{D}_{\mathrm{fd}}(A)$ consisting of dg $A$-modules $M$ with $H^{p}(M)=0$ for each $p>>0$. Then $(\mathbf{D}^{\leq0},\mathbf{D}^{\geq0})$ induces a $t$-structure $(\mathbf{D}_{\mathrm{fd}}^{-,\leq0},\mathbf{D}_{\mathrm{fd}}^{-,\geq0})$ by \cite[3.1.19]{BBD}, where $\mathbf{D}_{\mathrm{fd}}^{-,\leq0}=\mathbf{D}^{\leq0}\bigcap\mathbf{D}^{-}_{\mathrm{fd}}(A)$ and $\mathbf{D}_{\mathrm{fd}}^{-,\geq0}=\mathbf{D}^{\geq0}\bigcap\mathbf{D}^{-}_{\mathrm{fd}}(A)$. We denote by $\mathbf{D}_{\mathrm{fd}}^{-,0}$ the heart of $(\mathbf{D}_{\mathrm{fd}}^{-,\leq0},\mathbf{D}_{\mathrm{fd}}^{-,\geq0})$.

\begin{cor}\label{cor3} Let $A$ be a dg $K$-algebra satisfying $H^{p}(A)=0$ for $p>0$ and $H^{p}(A)$ is finite-dimensional for any $p\in\mathbb{Z}$. Assume that $G\in\mathbf{D}_{\mathrm{fd}}(A)$, $|A|=n$ and $I$ is a subset of $\{1,2,\ldots,n\}$. Then we have the following commutative diagram:
\begin{center} $\xymatrix@C=25pt@R=20pt{
A\ar[d]^{\mathfrak{G}_{G}(\Psi(-))}\ar[r]^{\mu} &\mu_{A_{I}}^{L}(A)\ar[d]^{\mathfrak{G}_{G}(\Psi(-))}&\\
(\mathfrak{G}_{G}((\mathbf{D}_{\mathrm{fd}}^{-,\leq0})_{A}),\mathfrak{G}_{G}((\mathbf{D}_{\mathrm{fd}}^{-,\geq0})_{A}))\ar[r]^{T_{\mathrm{HRS}}^{\mathfrak{G}(\mathcal{X}_{S_{I}})}}& (\mathfrak{G}_{G}((\mathbf{D}_{\mathrm{fd}}^{-,\leq0})_{\mu_{A_{I}}^{L}(A)}),\mathfrak{G}_{G}((\mathbf{D}_{\mathrm{fd}}^{-,\geq0})_{\mu_{A_{I}}^{L}(A)})).&}$
\end{center}
where $A_{I}=\bigoplus_{i\in I}A_{i}$ with indecomposable $A_{i}$, $S_{I}=\bigoplus_{i\in I}S_{i}$ with a complete set $S_{i}$ of pairwise non-isomorphic simple objects of $\mathbf{D}_{\mathrm{fd}}^{-,0}$ and $\mathcal{X}_{S_{I}}=\{X\in\mathbf{D}_{\mathrm{fd}}^{-,0}~|~\mathrm{Hom}_{\mathcal{T}}(X,S_{I})=0\}$.
\end{cor}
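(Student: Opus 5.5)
The plan is to deduce this final corollary from Corollary \ref{cor2}, in the same way the preceding corollary for $\mathrm{D}^{\mathrm{b}}(\mathrm{mod}A)$ was deduced. Here the ambient category is $\mathcal{T}=\mathbf{D}^{-}_{\mathrm{fd}}(A)$, or more precisely a suitable thick subcategory of $\mathbf{D}(A)$ containing $\mathbf{per}(A)$ and $\mathbf{D}_{\mathrm{fd}}(A)$, with the ST-pair $(\mathbf{per}(A),\mathbf{D}_{\mathrm{fd}}(A))$ and the silting object $A$. So the whole task is to check the hypotheses of Corollary \ref{cor2}.

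\textbf{Standing assumptions.} $\mathcal{T}$ must be $K$-linear, Krull--Schmidt and essentially small.
\begin{itemize}
\item $K$-linearity is clear.
\item Essential smallness holds because objects have finite-dimensional total cohomology, or are built from $A$ by finitely many cones and summands.
\item For Krull--Schmidt, use that the relevant Hom-spaces are finite-dimensional: $H^{p}(A)$ is finite-dimensional for all $p$, so $\mathrm{Hom}(A,\Sigma^{p}X)=H^{p}(X)$ is finite-dimensional. Hom-finite, idempotent-complete triangulated categories are Krull--Schmidt, and idempotent completeness holds for thick subcategories of $\mathbf{D}(A)$, which has countable coproducts.
\end{itemize}

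\textbf{The ST-pair.} Since $H^{p}(A)=0$ for $p>0$, the object $A$ is silting in $\mathbf{per}(A)$. I would verify (ST1)--(ST3) of Definition \ref{dfn2} directly.
\begin{itemize}
\item (ST1) follows from finite-dimensionality of $H^{p}(X)$.
\item For (ST2), note that $\mathcal{T}_{A}^{\leq 0}$ and $\mathcal{T}_{A}^{\geq 0}$ coincide with the restricted standard aisle and coaisle $\mathbf{D}_{\mathrm{fd}}^{-,\leq0}$, $\mathbf{D}_{\mathrm{fd}}^{-,\geq0}$, because $\mathrm{Hom}(A,\Sigma^{p}X)=H^{p}(X)$. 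The standard truncations preserve finite-dimensional total cohomology, which is exactly the induced $t$-structure cited from \cite[3.1.19]{BBD}.
\item For (ST3), use Remark \ref{rmk1}. The coaisle lies in $\mathbf{D}_{\mathrm{fd}}(A)$, and the restriction to $\mathbf{D}_{\mathrm{fd}}(A)$ is bounded because total cohomology is finite-dimensional and hence concentrated in finitely many degrees.
\end{itemize}
This is the dg analogue of \cite[Lemma 4.10]{AMY}, and I would cite the corresponding statement in \cite{AMY} (the dg version following \cite[Example 3.19]{AMY}) if it is available there.

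\textbf{Applying Corollary \ref{cor2}.} The heart $\mathbf{D}_{\mathrm{fd}}^{-,0}$ is identified via $H^{0}$ with finite-dimensional $H^{0}(A)$-modules. Its simples $S_{i}$ correspond to the indecomposable summands $A_{i}$, so $\mathcal{X}_{S_{I}}$ is a torsion class. The compatibility $\mathcal{D}_{\mu^{L}_{A_{I}}(A)}^{\leq0}=\mu^{L}_{\mathcal{X}_{S_{I}}}\mathcal{D}_{A}^{\leq0}$ comes from \cite[Proposition 7.7]{AMY}. Corollary \ref{cor2} with $G\in\mathbf{D}_{\mathrm{fd}}(A)=\mathcal{D}$ then gives the diagram.

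\textbf{Main obstacle.} The hardest step is the Krull--Schmidt and essential-smallness bookkeeping, together with the precise choice of the ambient category $\mathcal{T}$ so that both $\mathbf{per}(A)$ and $\mathbf{D}_{\mathrm{fd}}(A)$ are thick subcategories of it. I would handle this by taking $\mathcal{T}$ to be the thick closure of $\mathbf{per}(A)\cup\mathbf{D}_{\mathrm{fd}}(A)$ in $\mathbf{D}(A)$.
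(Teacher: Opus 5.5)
Your reduction matches the paper's proof, which simply invokes \cite[Proposition 6.12]{AMY} for the ST-pair $(\mathbf{per}(A),\mathbf{D}_{\mathrm{fd}}(A))$ with silting object $A$ (and the bounded $t$-structure on $\mathbf{D}_{\mathrm{fd}}(A)$) and then applies Corollary \ref{cor2}; your direct check of (ST1)--(ST3), with the ambient category taken to be the thick closure of $\mathbf{per}(A)\cup\mathbf{D}_{\mathrm{fd}}(A)$, just unpacks that citation. One caveat in your bookkeeping: finiteness of $\mathrm{Hom}(A,\Sigma^{p}X)=H^{p}(X)$ gives Hom-finiteness, and hence the Krull--Schmidt property, of $\mathbf{per}(A)$, which is what you need for $|A|$, $A_{I}$ and $\mu^{L}_{A_{I}}(A)$; it does not give Hom-finiteness of the whole ambient category (for instance of $\mathrm{Hom}(D,P)$ with $D\in\mathbf{D}_{\mathrm{fd}}(A)$ and $P\in\mathbf{per}(A)$), a point the paper does not address either.
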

\begin{proof} By \cite[Proposition 6.12]{AMY}, $(\mathbf{per}(A),\mathbf{D}_{\mathrm{fd}}(A))$ is an ST-pair on $\mathbf{D}^{-}_{\mathrm{fd}}(A)$ with a basic silting object $A$ on $\mathbf{per}(A)$ and ($(\mathbf{D}_{\mathrm{fd}}^{-})_{A}^{\leq0},(\mathbf{D}_{\mathrm{fd}}^{-})_{A}^{\geq0}$) is a bounded $t$-structure on $\mathbf{D}_{\mathrm{fd}}(A)$. So the result follows from Corollary \ref{cor2}.
\end{proof}

\bigskip
{\bf Acknowledgement.}
This work was partially supported by the National Natural Science Foundation of China (Grant No. 12361007).

\renewcommand\refname
\textbf{Jiaojiao Lu}\\
Department of Mathematics, Northwest Normal University, Lanzhou 730070, China.\\
E-mail: \textsf{lujiaojiao0218@163.com}\\[1mm]
\textbf{Zhongkui Liu}\\
Department of Mathematics, Northwest Normal University, Lanzhou 730070, China.\\
E-mail: \textsf{Liuzk@nwnu.edu.cn}\\[1mm]
\textbf{Renyu Zhao}\\
Department of Mathematics, Northwest Normal University, Lanzhou 730070, China.\\
E-mail: \textsf{zhaory@nwnu.edu.cn}\\[1mm]
\end{document}